\documentclass{svjour2}
\smartqed  
\usepackage{graphicx}
\usepackage[utf8x]{inputenc}
\usepackage{amssymb,latexsym}
\usepackage{amsmath}
\usepackage{ulem}

\def\tref#1{Theorem~$\ref{#1}$}
\def\Tref#1{Table~$\ref{#1}$}
\def\pref#1{Proposition~$\ref{#1}$}
\def\fref#1{Figure~$\ref{#1}$}\def\cref#1{Corollary~$\ref{#1}$}
\def\sref#1{Section~$\ref{#1}$}
\def\lref#1{Lemma~$\ref{#1}$}
\usepackage{colortbl}
\providecommand{\blk}{\cellcolor[gray]{.00}}

\begin{document}

\title{A cube dismantling problem 
related to bootstrap percolation\thanks{Research is supported by the Hungarian National Science 
Foundation (OTKA) Grant K~76099,
and Australian Research Council grant
DP120100197.}}

\author{J\'anos Bar\'at      \and
         Ian M. Wanless
}

\institute{J\'anos Bar\'at   \at
           \and
           Ian M. Wanless \at
              School of Mathematical
Sciences, Monash University\\ 
Victoria 3800 Australia
}

\date{}

\maketitle

\begin{abstract}

An $n\times n\times\dots\times n$ hypercube is made from $n^d$ unit
hypercubes.  Two unit hypercubes are neighbours if they share a
$(d-1)$-dimensional face.  In each step of a dismantling process, we
remove a unit hypercube that has precisely $d$ neighbours.  A move is
{\it balanced} if the neighbours are in $d$ orthogonal directions.  In
the extremal case, there are $n^{d-1}$ independent unit hypercubes
left at the end of the dismantling.  We call this set of hypercubes a
{\it solution}.  If a solution is projected in $d$ orthogonal
directions and we get the entire $[n]^{d-1}$ hypercube in each
direction, then the solution is {\it perfect}.

We show that it is possible to use a greedy algorithm to test whether
a set of hypercubes forms a solution.  Perfect solutions turn out to be
precisely those which can be reached using only balanced moves. Every
perfect solution corresponds naturally to a Latin hypercube. However,
we show that almost all Latin hypercubes do not correspond to
solutions.

In three dimensions, we find at least $n$ perfect solutions for every
$n$, and we use our greedy algorithm to count the perfect solutions
for $n\le6$.  We also construct an infinite family of imperfect solutions
and show that the total size of its three orthogonal projections is
asymptotic to the minimum possible value.

Our results solve several conjectures posed in a proceedings paper by
Bar\'at, Korondi and Varga.

If our dismantling process is reversed we get a build-up process very
closely related to well-studied models of bootstrap percolation.  We
show that in an important special case our build-up reaches the same
maximal position as bootstrap percolation.

\keywords{Bootstrap percolation \and Graph decomposition \and Greedy algorithm \and Latin square \and Latin hypercube}
 \subclass{05B15 \and 05B40 \and 05C30 \and 05C70}
\end{abstract}

\section{Introduction}
\label{intro}

A $d$-dimensional hypercube of edge-length $n$ is composed of $n^d$
unit hypercubes, which we call ``cubes'' for short.  We identify these
cubes with $d$-tuples of integers from $\{1,2,\dots,n\}$, and think of
them as vertices of a graph $[n]^d$.  Two vertices are {\it
  neighbours} if the corresponding two cubes have a
$(d-1)$-dimensional face in common.  We will consider sequences of
induced subgraphs of $[n]^d$.  Starting from $[n]^d$, we successively
remove vertices of degree $d$ from the current graph until no further
vertices can be removed.  Here the degree of a vertex is a dynamic
notion referring to the degree in the current graph.  We prove a
number of properties of the positions that may be reached by such a
{\it dismantling} process. In doing so, we settle several conjectures
posed in~\cite{hj2011}.

We also study the reverse of our dismantling process, which we call a
{\it build-up}. The build-up process is similar to well-studied models
of bootstrap percolation, see \cite{ami,random,holroyd,min,sau} for
example. One difference with our process is that a new site is added
only when the number of neighbours exactly matches a prescribed value,
whereas in bootstrap percolation ``exactly matches'' would be replaced
by ``matches or exceeds''.  Therefore, extremal results in bootstrap
percolation theory can serve as upper bounds for our problem.  Another
issue is that we only add one vertex at a time, while in bootstrap
percolation all sites satisfying the threshold condition are added
simultaneously.  In our setting, two vertices that correspond to
independent vertices of $[n]^d$ can be added
simultaneously. However, if the vertices are neighbours and both have
degree $d$, then they cannot both be added.  This changes the nature
of the problem.  For bootstrap percolation it is of fundamental
importance to establish bounds on the size of minimal percolating sets
\cite{min,Riedl} or on the percolating time \cite{time}.  In contrast,
the objects corresponding to percolating sets in our setting all have
the same size (see \pref{p:opti} below). Therefore, since
we only add one cube at a time, they all take the same time to build-up.
Despite these differences, we show in \pref{p:uniqmax} that in an
important special case, our build-up process is effectively identical
to bootstrap percolation. Another similarity is that the so-called
modified bootstrap percolation in $[n]^d$ requires at least one
neighbour in each of the $d$ directions \cite{holroyd}.  This property
is analogous to the idea of a balanced move in our investigation.

The Perimeter Lemma (see e.g. \cite{random}) is a well-known tool
in bootstrap percolation. The same idea will help to prove a number of
extremal results in our paper, beginning with this:

\begin{proposition}\label{p:opti}
Starting from $[n]^d$, there are at least $n^{d-1}$ cubes left
after any sequence of moves, where each move removes a vertex of
degree~$d$. 
\end{proposition}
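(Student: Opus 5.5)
Our plan is a perimeter (surface area) argument. For a set $S$ of cubes in $[n]^d$, let $P(S)$ be the total $(d-1)$-dimensional surface area of the union of the cubes in $S$, i.e. the number of unit facets that lie on the boundary of exactly one cube of $S$. Equivalently, if $S$ induces $e(S)$ edges in $[n]^d$, then
\[
P(S)=2d\,|S|-2e(S).
\]
We work throughout in the reverse (build-up) direction, or equivalently track how $P$ changes under each move.

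First, we compute the effect of one move. When a vertex $v$ of current degree exactly $d$ is removed from the current set $S$, the new set satisfies $|S\setminus\{v\}|=|S|-1$ and $e(S\setminus\{v\})=e(S)-d$. Therefore
\[
P(S\setminus\{v\})=2d(|S|-1)-2(e(S)-d)=P(S).
\]
So the perimeter is invariant under every move. Initially $P([n]^d)=2d\,n^{d-1}$, since the boundary of the big hypercube consists of $2d$ faces, each containing $n^{d-1}$ unit facets.

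Second, we bound the perimeter of the final set $T$ from above in terms of $|T|$. Each unit cube contributes at most $2d$ boundary facets, so
\[
P(T)\le 2d\,|T|.
\]
Combining this with the invariance gives $2d\,n^{d-1}=P(T)\le 2d|T|$, hence $|T|\ge n^{d-1}$. This bound holds after any sequence of moves, not only at a terminal position. Equality holds exactly when $e(T)=0$, that is, when $T$ is independent, which matches the extremal description in the abstract.

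There is essentially no obstacle here. The only point needing care is the invariance computation, namely that the degree of $v$ in the current induced subgraph is exactly $d$, so exactly $d$ edges disappear. This is precisely why the move rule ``degree exactly $d$'' makes $P$ an invariant, in contrast to the mere monotonicity one gets from the Perimeter Lemma in bootstrap percolation.
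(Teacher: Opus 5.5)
Your proof is correct and follows essentially the same approach as the paper: the visible surface (your $P$) is invariant because a removed cube has exactly $d$ neighbours, it starts at $2dn^{d-1}$, and each remaining cube contributes at most $2d$ visible faces. Your identity $P(S)=2d|S|-2e(S)$ makes the invariance step more explicit than the paper does.
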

 
\begin{proof}
A face of a cube is {\it invisible} if there is another cube attached to it.
Otherwise the face is {\it visible}. The {\it visible surface} is the total number of visible faces.
In the beginning, the surface of the $d$-dimensional hypercube is
visible.  Therefore, the total visible surface of the unit cubes is
$2dn^{d-1}$.  In each move the degree condition guarantees that the
visible surface is unchanged.  Therefore, at the end, the visible
surface is $2dn^{d-1}$.  That requires at least $n^{d-1}$ unit cubes. \qed
\end{proof}

A {\it position} refers to any subset of cubes in $[n]^d$, that is,
an induced subgraph of $[n]^d$. 
In the extremal case of \pref{p:opti} the $n^{d-1}$ unit cubes correspond to
independent vertices (no two of them are neighbours) of $[n]^d$.  
In our terminology $n^{d-1}$ independent vertices of $[n]^d$ form a {\it base position}.  
We colour the vertices in a position {\it black},
and the remaining vertices of $[n]^d$ {\it white}.  
The degree of a white vertex is its number of black neighbours.  
One question is the following.
For a given base position, can we successively remove all white
vertices according to the degree $d$ rule?  If the answer is yes, then
the base position is a {\it solution}.  A sequence of moves starting
from $[n]^d$ leading to a solution is a {\it dismantling}.  The
reverse process is a {\it build-up}.

Each vertex of $[n]^d$ is identified with a $d$-tuple
$(x_1,\dots,x_d)$.  A {\it line} is the set of $n$ vertices found by
fixing $(d-1)$ coordinates and allowing the other coordinate to vary. A
{\it section} is the set of $n^{d-1}$ vertices found by fixing one
coordinate and allowing the other $(d-1)$ coordinates to vary.  A
section is {\it facial} if its fixed coordinate has value $1$ or $n$.
In \sref{s:geom} we consider some properties of the intersection of a
solution and a section.  A {\it projection} of a subset of the vertices
in $[n]^d$ is a projection to $[n]^{d-1}$ obtained by omitting any one
specific coordinate.  The projection is {\it full} if it is 
surjective onto $[n]^{d-1}$.

Each step in a dismantling, that is, the removal of a vertex of degree
$d$, is a {\it move}.  A move is {\it balanced} if the $d$ neighbours lie
in different lines.  A dismantling or build-up is {\it balanced} if
all of its moves are balanced, otherwise it is unbalanced.  We show in
Section~\ref{gandb} that if a dismantling to a solution is balanced,
then the solution corresponds to a Latin hypercube.

Any base position with full
projections in all $d$ orthogonal directions is {\it perfect}.  A
position is {\it convex} if no line of $[n]^d$ contains one or more
white cubes between black cubes.  In particular, a perfect base
position is convex.  A position is {\it minimal} if there is no black
vertex with precisely $d$ neighbours.  That is, no more dismantling
moves are possible.  A position is {\it maximal} if there is no white
vertex with precisely $d$ neighbours.  That is, no more build-up moves
are possible.  We are mainly interested in the maximal configuration
$[n]^d$, and the minimal configurations that are solutions.  In
Theorem~\ref{t:notcontain} we show that the symmetric difference of
any two minimal or any two maximal positions is non-empty.

A dismantling gives us an edge-decomposition of $[n]^d$ into copies
of the complete bipartite graph $K_{1,d}$, also known as the $d$-star.
Decompositions are well-studied in graph theory and design theory
\cite{bar,bosak,dor,hof,w3f}.  The existence of a star decomposition does
not imply a dismantling since dismantling might plausibly reach a
subgraph that has a decomposition but no vertex of degree $d$.
Hoffmann~\cite{hof} gave a necessary and sufficient condition for star
decompositions.  His third condition is a Hall-type criterion, which
is difficult to check in general.  It is known \cite{dor} that to
decide if there exists a $d$-star decomposition of a given graph $G$
is $NP$-complete for any $d\ge 3$.  Recently, Thomassen~\cite{w3f}
proved that any $8$-edge-connected graph $G$ has a $3$-star
decompositions.  Notice that $[n]^3$ is only $3$-edge-connected and
has maximum degree $6$.  Therefore, the $3$-star decomposition result
implied by our \tref{cyc} is non-trivial. See also \cite{bar}, where
$K_{1,3}$-decompositions are related to orientations and flows.

\section{Greedy and balanced}\label{gandb}

If $B$ is an imperfect solution, then any dismantling to $B$ is
unbalanced as was observed in \cite{hj2011} for the $3$-dimensional case.  Formally, we have the
following

\begin{lemma}\label{l:balmov}
If all moves are balanced in a dismantling to a solution $B$,
then $B$ is perfect.
\end{lemma}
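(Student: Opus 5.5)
The plan is to refine the visible-surface argument of \pref{p:opti}. Instead of counting all visible faces together, count them separately for each line. The key observation is that a balanced move preserves the number of visible faces inside every single line.

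First I will make "balanced" concrete. Let $v$ be the cube removed in a move, and let $\ell_i(v)$ be the line through $v$ in direction $i$. The neighbours of $v$ in direction $i$ are the (at most two) cubes adjacent to $v$ along $\ell_i(v)$, and they lie on $\ell_i(v)$. A move is balanced when the $d$ neighbours lie in different lines. Since $v$ has exactly $d$ neighbours and there are $d$ lines through $v$, this forces exactly one present neighbour along each line $\ell_i(v)$, $i=1,\dots,d$.

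Next, for a line $L$ in direction $i$, let $f(L)$ be the number of visible faces of cubes in $L$ that are perpendicular to direction $i$. Equivalently, these are the faces lying between consecutive positions of $L$ or at its two ends. I claim $f(L)$ is unchanged by any balanced move.
\begin{itemize}
\item If the removed cube $v$ is not on $L$, then no cube of $L$ and no face perpendicular to $i$ in $L$ is affected. Such faces can only be covered by cubes on $L$ itself.
\item If $v\in L$, then $v$ has exactly one present neighbour $w$ on $L$. Removing $v$ destroys its one visible $i$-face (the side away from $w$) and makes visible the previously hidden face of $w$. So $f(L)$ is unchanged.
\end{itemize}

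Finally, compare the start and the end of the dismantling. Initially $L$ is a full line of $n$ cubes, so $f(L)=2$ (only its two end faces). At the end the position is a solution $B$, whose cubes are pairwise non-adjacent. Hence each black cube on $L$ contributes two visible $i$-faces, and $f(L)=2|B\cap L|$. Therefore $|B\cap L|=1$ for every line $L$ in every direction. This says exactly that the projection of $B$ obtained by omitting coordinate $i$ is a bijection onto $[n]^{d-1}$, so all $d$ projections are full and $B$ is perfect.

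The only step needing care is the translation of the balanced condition into "exactly one neighbour along each line through $v$." This uses the pigeonhole argument with $d$ neighbours and $d$ lines. Everything else is bookkeeping of the kind already done in \pref{p:opti}.
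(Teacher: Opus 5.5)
Your proof is correct and rests on the same key fact as the paper's: a balanced move that removes a cube on a line $L$ destroys exactly one edge of $L$, so the number of maximal runs of cubes in $L$ (your $f(L)/2$) is unchanged. The paper argues by contradiction locally: the $k$ edges between two consecutive black cubes on a line cannot be removed by the $k-1$ white cubes between them. That gives at most one black cube per line and tacitly needs $|B|=n^{d-1}$ to conclude fullness, whereas your per-line invariant yields $|B\cap L|=1$ directly.
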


\begin{proof}
Assume to the contrary, that two black vertices $u$ and $v$ have the
same projection in some direction $r$, and there are only white
vertices between $u$ and $v$.  Say $u$ and $v$ are at distance $k$.
Every edge is removed during
dismantling. However, any balanced move removes one white vertex and
one edge in direction $r$, so it is impossible to remove the $k$ edges
between $u$ and $v$ with the $k-1$ available white vertices.  This
contradiction proves that the projection is full in all
directions.  
\qed \end{proof}

It is challenging to compare the importance of balanced and unbalanced
moves.  Note that every dismantling begins with a balanced move.
There are dismantlings in $3$ dimensions with balanced moves only, we
show one particular solution in Section~\ref{s:cyclic}.  On the other
hand, there seem to be many more imperfect solutions than perfect
ones.  Theoretically, it is plausible to replace a few balanced moves
by a few imbalanced moves.  However, the authors of \cite{hj2011}
conjectured that the converse of \lref{l:balmov} holds.  We settle that
conjecture in our next Proposition, but first we need a preliminary
result:

\begin{lemma}\label{convex} 
Let $C$ be a convex position.\\
$(0)$ Any build-up from $C$ is balanced, and produces only convex positions.\\
$(1)$ Any dismantling to $C$ is balanced.\\
$(2)$ Only convex positions can be reached by balanced dismantling from $C$.
\end{lemma}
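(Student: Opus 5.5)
The plan is to prove (0) first, since (1) and (2) follow from it almost immediately. The key observation is the following. In a convex position, a white vertex $w$ and a line $L$ through $w$ meet the black cubes of $L$ in a contiguous segment (or the empty set). Since $w$ is white, $w$ lies strictly outside that segment. Hence $w$ has at most one black neighbour in $L$: only the endpoint of the segment adjacent to $w$ can be a neighbour.

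So a white vertex of a convex position has at most one black neighbour in each of its $d$ lines, and therefore at most $d$ black neighbours in total. If a build-up move adds $w$, then $w$ has exactly $d$ black neighbours, which forces exactly one black neighbour in each of the $d$ lines through $w$. This is precisely a balanced move.

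Next I check that the new position $C\cup\{w\}$ is again convex. Take a line $L$ through $w$. Either $L$ contains a black neighbour of $w$ that ends the black segment on $L$, or $L$ contains no black cubes. We have just shown that every line through $w$ is of the first kind, so adding $w$ extends that segment by one and it stays contiguous. Lines not through $w$ are unchanged. Induction on the number of moves then gives (0): every build-up move from $C$ is balanced and every intermediate position is convex. I expect this to be the only step needing care, namely confirming that the segment structure is preserved. It relies on the move being balanced, which we have already derived.

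For (1), reverse a dismantling to $C$. A dismantling to $C$ read backwards is a build-up from $C$, and the reversed moves are the same vertex additions with the same neighbourhoods. By (0) every such move is balanced, so the dismantling is balanced.

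For (2), suppose we perform a balanced dismantling step from a convex position $P$, removing a black vertex $v$ with exactly $d$ neighbours, one in each line through $v$. In each line $L$ through $v$, the black cubes form a segment containing $v$, and $v$ has exactly one black neighbour on $L$. So $v$ is an endpoint of that segment, and the segment has length at least $2$. Removing an endpoint keeps the segment contiguous, and lines not through $v$ are unaffected, so $P\setminus\{v\}$ is convex. Induction on the number of moves shows that every position reached by balanced dismantling from $C$ is convex.
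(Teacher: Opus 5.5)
Your proof is correct and follows essentially the same route as the paper. In a convex position a white vertex has at most one black neighbour per line, so every build-up move is balanced and only extends a contiguous black segment at its end; (1) then follows by reversing the dismantling, and (2) holds because a balanced removal can only take an endpoint of a segment. The only difference is that you argue convexity preservation directly, whereas the paper argues by contradiction: a new gap next to the added vertex $v$, combined with $v$'s neighbour $x$ on that line, would already give a gap in $C$.
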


\begin{proof}
First consider a build-up from $C$.
Any unbalanced move requires two non-consecutive cubes in the same line.
Therefore, only balanced moves are possible in a convex position.

Assume $D$ is a non-convex position formed by 
adding a single vertex $v$ to the convex position $C$.  Since $D$ is
non-convex, there are two non-consecutive vertices in a line $l$ such
that all vertices between them are missing.  Clearly one of the
vertices must be $v$, the other one is, say, $u$.  Vertex $v$ was
added in a balanced move, therefore there is a neighbour $x$ of $v$ in
line $l$.  There are no vertices between $v$ and $u$ in $D$, therefore
$x$ and $u$ make $C$ non-convex, a contradiction. So, by induction, only
convex positions can be reached by building-up from $C$.

This proves (0), and (1) follows immediately since
dismantling is the reverse of build-up. Part (2) is straightforward,
since each move of a dismantling preserves convexity unless we remove
a cube between two other cubes, and such a move is not balanced.  
\qed \end{proof}

A $d$-dimensional {\it Latin hypercube} of order $n$ is a
$d$-dimensional array of $n$ symbols in which each symbol occurs
exactly once in each line \cite{lhc}.  Each position $P$ in a
dismantling can be associated with a $(d-1)$-dimensional array
$M_P=[m_{x_1x_2\dots x_{d-1}}]$ of sets such that $z\in m_{x_1x_2\dots
  x_{d-1}}$ if and only if the cell $(x_1,\dots,x_{d-1},z)$ is included in
$P$.  It is plausible that $M_P$ might be a Latin hypercube with the
$n$ symbols being the $n$ possible singleton sets. Positions that
correspond to Latin hypercubes in this way turn out to be quite
special:

\begin{proposition}\label{p:convperfLS}
For a solution $B$ the following statements are equivalent:\\
$(0)$ $B$ is convex.\\
$(1)$ $B$ is perfect.\\
$(2)$ $M_B$ is a Latin hypercube.\\
$(3)$ There is a balanced dismantling of $[n]^d$ to $B$.\\
$(4)$ Every dismantling of $[n]^d$ to $B$ is balanced.\\
$(5)$ There is a balanced build-up from $B$ to $[n]^d$.\\
$(6)$ Every build-up from $B$ to $[n]^d$ is balanced. 
\end{proposition}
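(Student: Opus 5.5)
We plan to prove the cycle of implications $(0)\Rightarrow(1)\Rightarrow(2)\Rightarrow(0)$ among the structural conditions, and then tie the dismantling and build-up conditions to them via \lref{l:balmov} and \lref{convex}. Throughout, $B$ is a solution, so it is a base position: $n^{d-1}$ pairwise independent black cubes.

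For the structural part, first take $(0)\Rightarrow(1)$. Suppose $B$ is convex. In any line, two black cubes are non-adjacent by independence, so convexity forbids two black cubes on the same line. Hence each projection of $B$ is injective. Since $|B|=n^{d-1}$, each projection is a bijection onto $[n]^{d-1}$, so $B$ is perfect. Next, for $(1)\Rightarrow(2)$, full projections together with $|B|=n^{d-1}$ mean every line contains exactly one black cube. Reading this along the last coordinate, each entry of $M_B$ is a singleton. Reading it along the other coordinate directions, each symbol occurs exactly once in each line of the array, so $M_B$ is a Latin hypercube. Finally, $(2)\Rightarrow(0)$: if $M_B$ is Latin, then every line in the last direction has exactly one black cube, and the Latin property gives the same for lines in every other direction. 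A line with at most one black cube cannot have white cubes between black cubes, so $B$ is convex.

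For the process part, $(3)\Rightarrow(1)$ is \lref{l:balmov}. The implication $(0)\Rightarrow(4)$ is \lref{convex}(1), and $(0)\Rightarrow(6)$ is \lref{convex}(0). Dismantlings and build-ups are exact reverses of each other, and a move is balanced in one iff its reverse is balanced in the other. This gives $(3)\Leftrightarrow(5)$ and $(4)\Leftrightarrow(6)$.

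It remains to close the loop: $(4)\Rightarrow(3)$ and $(6)\Rightarrow(5)$. These need only existence of a dismantling to $B$, which holds by the definition of a solution; an "every" statement over a nonempty set gives the "there is" statement. Thus all seven conditions are equivalent. The only point needing care is $(0)\Rightarrow(1)$, where independence of black cubes must be combined with convexity. Everything else is bookkeeping from the earlier lemmas, so we do not expect a real obstacle.
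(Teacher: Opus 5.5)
Your proof is correct and follows essentially the same route as the paper. The structural equivalences use independence together with the count of $n^{d-1}$ black cubes against $n^{d-1}$ lines per direction, and (3)$\Rightarrow$(1) comes from \lref{l:balmov}, (0)$\Rightarrow$(6) from \lref{convex}, the reversal gives (3)$\Leftrightarrow$(5) and (4)$\Leftrightarrow$(6), and existence of a dismantling gives (4)$\Rightarrow$(3). The only differences are cosmetic: you close the cycle (0)$\Rightarrow$(1)$\Rightarrow$(2)$\Rightarrow$(0) where the paper proves (0)$\Leftrightarrow$(1) and (1)$\Leftrightarrow$(2) separately, and you also cite part (1) of \lref{convex} for (0)$\Rightarrow$(4).
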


\begin{proof}
If $B$ is convex, then there cannot be more than one black vertex in any line, 
since $B$ consists of independent vertices. However, there has
to be one vertex per line on average, which means there must be
exactly one vertex in each line.  So $B$ is perfect.  Conversely, if
$B$ is perfect then there is exactly one black vertex in each line,
which means that $B$ is necessarily convex.  We conclude that (0) and
(1) are equivalent.

The cells of $M_B$ consist of singletons if and only if there is
exactly one vertex of $B$ in each line in the $d$-th direction. This
is the same as specifying one orthogonal projection of $B$ to be
full. The other $(d-1)$ orthogonal projections of $B$ are full if and
only if the singletons in each line of $B$ are distinct.  So (1) is
equivalent to (2) by the definition of a Latin hypercube.

Dismantling and build-up are the reverse of each other, and balanced
moves mean the same thing in both processes. Hence (3) is equivalent
to (5) and (4) is equivalent to (6). We are assuming that $B$ is a
solution so there is a dismantling from $[n]^d$ to $B$. Thus (4) implies
(3). \lref{l:balmov} shows that (3) implies (1), and 
\lref{convex} shows that (0) implies (6). The proposition follows.
\qed \end{proof}

We stress that \pref{p:convperfLS} applies only to
solutions $B$, not to all base positions $B$. We will see in
\sref{search} that (2) is very far from implying (3) for a general
base position.

\medskip

When the authors of \cite{hj2011} tried checking if base positions are
solutions by exhaustive search, most of the search time was spent
backtracking in the search tree.  They conjectured that this is
unnecessary in some sense. We validate this conjecture here.  The
steps of a build-up can be encoded as follows: in each step, we record
the vertex $v$ that we are adding, and store its set $N(v)$ of neighbours for
validation of the degree.  We use $v^*$ as short-hand for the pair
$(v,N(v))$.

\begin{theorem}\label{t:notcontain}
Let $P$ be any position. There cannot be two distinct maximal positions
$M_1\subset M_2$ such that both $M_1$ and $M_2$ can be 
reached by build-up from $P$.
Similarly there cannot be two distinct minimal positions
$M_1\subset M_2$ such that both $M_1$ and $M_2$ can be 
reached by dismantling from $P$.
\end{theorem}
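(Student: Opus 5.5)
The plan is to combine the surface-invariance idea from the proof of \pref{p:opti} with an edge-counting argument along the build-up (or dismantling) sequence, and to locate a vertex that violates maximality (or minimality). For a position $S$ write $e(S)$ for the number of edges of $[n]^d$ inside $S$, and $e(X,Y)$ for the number of edges between disjoint sets $X,Y$. The visible surface of $S$ is $2d|S|-2e(S)$. A build-up or dismantling move adds or removes a vertex with exactly $d$ neighbours, so it leaves this quantity unchanged. Hence $P$, $M_1$ and $M_2$ all have the same visible surface.

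\emph{Build-up case.} Suppose $M_1\subsetneq M_2$ and put $R=M_2\setminus M_1\neq\emptyset$. Expanding the surface of $M_2$ relative to $M_1$ and using equality of surfaces gives
\[
 e(R)+e(R,M_1)=d|R|.
\]
Now fix a build-up $v_1,\dots,v_k$ from $P$ to $M_2$; note that $R\subseteq\{v_1,\dots,v_k\}$ because $P\subseteq M_1$. Each $v_i\in R$ has exactly $d$ neighbours in $P\cup\{v_1,\dots,v_{i-1}\}$ when it is added. Summing over $R$ gives $d|R|$. This sum counts:
\begin{itemize}
\item every edge inside $R$ exactly once;
\item the edges from $R$ to those $M_1$-vertices that precede them in the order (with vertices of $P$ counted as earliest).
\end{itemize}
Comparing with the identity above, no edge can join $r\in R$ to a vertex of $M_1$ added after $r$.

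Now take the first vertex $v_i$ of the sequence lying in $R$. At that moment the current position $Q$ satisfies $Q\subseteq M_1$, and $v_i$ has exactly $d$ neighbours in $Q$. By the previous paragraph, every $M_1$-neighbour of $v_i$ already lies in $Q$. Thus $v_i$ is a white vertex of $M_1$ with exactly $d$ black neighbours, contradicting maximality of $M_1$.

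\emph{Dismantling case.} Here $P\supseteq M_2\supsetneq M_1$, and again $R=M_2\setminus M_1$. Surface equality again yields $e(R)+e(R,M_1)=d|R|$. Fix a dismantling from $P$ to $M_1$; it removes every vertex of $R$. Each $r\in R$ has exactly $d$ neighbours among the vertices present when it is removed. Summing over $R$ counts:
\begin{itemize}
\item each edge inside $R$ at least once (namely when the earlier endpoint is removed);
\item every edge from $R$ to $M_1$;
\item every edge from $r$ to a vertex of $P\setminus M_2$ removed after $r$.
\end{itemize}
Equality therefore forces that no $r\in R$ is removed while one of its neighbours in $P\setminus M_2$ is still present.

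Take the first vertex $r\in R$ removed. At that time the current position contains all of $M_2$, and by the forced condition it contains no other neighbours of $r$. Hence $r$ has exactly $d$ neighbours in $M_2$, contradicting minimality of $M_2$.

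The main obstacle is the step that rules out extra neighbours. A naive choice of ``first vertex not in $M_1$'' only gives at least $d$ neighbours, not exactly $d$. The surface identity is what upgrades ``$\ge d$'' to ``$=d$'', and I would check its bookkeeping most carefully, in particular the claim that each edge is counted once in the respective orders.
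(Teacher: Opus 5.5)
Your argument is correct, and it takes a genuinely different route from the paper. The paper does no counting. It encodes each move as $v^*=(v,N(v))$ and runs an exchange argument on two explicit sequences, one to $M_1$ and one to $M_2$. It splits into two cases:
\begin{itemize}
\item Every $u_i^*$ also occurs among the $v_j^*$. Then the first $v_j\notin M_1$ can have no extra $M_1$-neighbour $w$: such a $w$ would have to be added after $v_j$, giving $v_j\in N(w)\subseteq M_1$.
\item Some $u_i=v_k$ has $u_i^*\neq v_k^*$. This is ruled out by a first-counterexample argument.
\end{itemize}
You replace this with the conserved quantity $d|S|-e(S)$, which is half the visible surface in \pref{p:opti}. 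Its equality across $P,M_1,M_2$ gives $e(R)+e(R,M_1)=d|R|$. Comparing this with the $d|R|$ edges used by the $R$-vertices along a single sequence forces your ordering constraint. After that, the first $R$-vertex is immediately an available move.

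The bookkeeping you flagged checks out:
\begin{itemize}
\item In the build-up, each edge inside $R$ is counted exactly once, at its later endpoint. The only $R$--$M_1$ edges counted are those whose $M_1$-end comes first.
\item In the dismantling, each edge inside $R$ is counted exactly once, at its earlier-removed endpoint. Every $R$--$M_1$ edge is counted, so the $P\setminus M_2$ term must vanish.
\end{itemize}

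Your route gives a single-case proof that needs only one explicit sequence; the other position enters only through containment and the invariant. It also makes transparent why ``at least $d$'' upgrades to ``exactly $d$'', and it reuses the perimeter idea the paper applies elsewhere. The paper's route avoids any global invariant. It also yields the finer fact that every vertex of $M_1\setminus P$ is added with the same neighbour set in both sequences, which is the same kind of statement as \tref{anytwo}. Neither argument uses anything about $[n]^d$ beyond its being a graph with threshold $d$, so both carry over to the cuboids $C(k,l,m)$.
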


\begin{proof}
Assume to the contrary that some build-up $u_1^*,u_2^*,\dots,u_f^*$
from $P$ stops in a maximal position $M_1$,
while another sequence $v_1^*,v_2^*,\dots,v_{g}^*$ builds up from $P$ to
$M_2$, where $M_1\subsetneq M_2$. 

First suppose that
$\{u_1^*,u_2^*,\dots,u_f^*\}\subset\{v_1^*,v_2^*,\dots,v_{g}^*\}$.
Since $M_1\neq M_2$, there is at least one vertex in the sequence
$v_1,v_2,\dots,v_g$ that does not occur in $\{u_1,u_2,\dots,u_f\}$.  Let
$v_j$ be the first such vertex in the sequence, and suppose
$v_j^*=(v_j,\{x_1,\dots,x_d\})$.  We claim that $v_j^*$ is a possible move in
position $M_1$, contradicting the maximality of $M_1$.  By choice of
$v_j$, each of $x_1,\dots,x_d$ must be in $P$ or $\{u_1,\dots,u_f\}$ and hence
each is in $M_1$.  Thus $v_j$ has at least $d$ neighbours in $M_1$.
Suppose $v_j$ has another neighbour $w$ in $M_1$. Since 
$w\notin N(v_j)$ it cannot be that $w$ is in $P$, so there must
be some $k$ for which $u_k=w$. By assumption, $u_k^*=v_i^*$ for some $i$.
If $i<j$, then $w=u_k=v_i\in N(v_j)$ which contradicts the choice of $w$.
If $i>j$, then $v_j\in N(v_i)=N(u_k)\subset M_1$ which contradicts the choice
of $v_j$. It follows that $v_j$, which
is not present in $M_1$, has exactly $d$ neighbours in $M_1$.
Thus $v_j^*$ is a possible move in position $M_1$, as claimed.

It remains to consider the possibility that $u_i=v_k$ but $u_i^*\ne v_k^*$ for some $i,k$. 
We choose the first such $u_i$
in the sequence $u_1,\dots,u_f$. Since $u_i^*\ne v_k^*$, there is 
$x\in N(u_i)\setminus N(v_k)$ and this implies that $x=u_j$ for some
$j<i$. By choice of $i$, this means that $u_j^*=v_l^*$ for some $l$.
As $j<i$ we have $v_k=u_i\notin N(u_j)=N(v_l)$, which implies $l<k$.
Hence $x=u_j=v_l\in N(v_k)$, contradicting the choice of $x$
and proving the first claim of the theorem.

The proof of the other claim is similar. Suppose that there are two
minimal positions $M_1\subsetneq M_2$ such that $M_2$ can be
reached from $P$ by the dismantling $u_1^*,u_2^*,\dots,u_f^*$,
while another sequence $v_1^*,v_2^*,\dots,v_{g}^*$ dismantles from $P$ to
$M_1$. If $\{u_1^*,u_2^*,\dots,u_f^*\}\subset\{v_1^*,v_2^*,\dots,v_{g}^*\}$
then the first move in $v_1^*,v_2^*,\dots,v_{g}^*$ that is not in
$u_1^*,u_2^*,\dots,u_f^*$ will be a valid dismantling move from $M_2$,
contradicting the minimality of $M_2$. Otherwise, we have $u_i=v_k$ 
but $u_i^*\ne v_k^*$ for some $i,k$. Taking the first such $i$ yields
a similar contradiction to the build-up case.
\qed \end{proof}

Since any position that can be reached from $P$ is a subset of $[n]^d$, 
we have:

\begin{corollary}\label{c:strong}
Let $P$ be any position that can be reached from $[n]^d$ by dismantling.
The only maximal position that can be reached by building up from $P$
is $[n]^d$.
\end{corollary}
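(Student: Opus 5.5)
The plan is to apply the first claim of \tref{t:notcontain} with $M_2=[n]^d$. The corollary then follows almost at once, because every position is a subset of $[n]^d$. Two facts need checking: that $[n]^d$ is itself a maximal position reachable by build-up from $P$, and that any other maximal position $M$ reachable from $P$ satisfies $M\subsetneq[n]^d$.

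First, since $P$ is reached from $[n]^d$ by some dismantling $w_1^*,\dots,w_g^*$, reversing the sequence gives a build-up from $P$ to $[n]^d$. This works because dismantling and build-up are reverse processes: each removed vertex had exactly $d$ neighbours at the moment of removal, so it has exactly $d$ neighbours when it is added back in the reverse order. Next, $[n]^d$ is maximal trivially, since it has no white vertices and so no build-up move is available from it.

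Now let $M$ be any maximal position reachable from $P$ by build-up. Since $M$ is a subset of $[n]^d$, either $M=[n]^d$ or $M\subsetneq [n]^d$. In the latter case, $M_1=M$ and $M_2=[n]^d$ would be two distinct maximal positions with $M_1\subset M_2$, both reachable by build-up from $P$, which contradicts \tref{t:notcontain}. Hence $M=[n]^d$.

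I do not expect a real obstacle here; all the work sits in \tref{t:notcontain}. The only care needed is in the first step, in confirming that reversing a dismantling really is a legal build-up. This matters because the exact-degree condition must hold at each intermediate stage, and it does, since the position before each reversed addition is exactly the position after the corresponding removal.
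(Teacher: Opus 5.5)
Your proposal is correct and follows the paper's own route: the paper obtains the corollary directly from the first claim of \tref{t:notcontain} with $M_2=[n]^d$, using only that every reachable position is a subset of $[n]^d$. Your explicit checks that the reversed dismantling is a legal build-up and that $[n]^d$ is trivially maximal simply spell out steps the paper leaves implicit.
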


Crucially, this last result entitles us to use the following greedy
algorithm to check candidate solutions.  We repeatedly traverse the
white vertices (in any order) and add any of degree $d$, until a maximal
position is reached.

\begin{corollary}\label{c:LSgreedy}
A base position is a solution if and only if the greedy algorithm
terminates with $[n]^d$.
\end{corollary}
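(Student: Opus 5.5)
The plan is to prove the two directions separately, using the observation that the greedy algorithm is just one particular build-up from the base position $B$. A build-up from $B$ to $[n]^d$ is exactly the reverse of a dismantling from $[n]^d$ to $B$, so $B$ is a solution if and only if some build-up from $B$ reaches $[n]^d$.

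For the "if" direction, suppose the greedy algorithm terminates with $[n]^d$. The algorithm only ever adds a white vertex whose degree, that is, its number of black neighbours in the current position, is exactly $d$. So the sequence of additions it performs is a legitimate build-up from $B$ to $[n]^d$. Reversing this sequence gives a dismantling of $[n]^d$ to $B$, and hence $B$ is a solution.

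For the "only if" direction, suppose $B$ is a solution. Then $B$ is a position reachable from $[n]^d$ by dismantling. First I will check that the greedy algorithm terminates, and at a maximal position. Each pass either adds a vertex or adds nothing, and there are only finitely many vertices. When a full traversal adds nothing, no white vertex has degree exactly $d$, so the current position $M$ is maximal in the paper's sense. Moreover, $M$ was reached from $B$ by build-up. Now \cref{c:strong}, applied with $P=B$, says the only maximal position reachable by building up from $B$ is $[n]^d$. Hence $M=[n]^d$.

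The key point, already handled by \tref{t:notcontain} and \cref{c:strong}, is that the order in which the greedy algorithm traverses vertices cannot trap it in a different maximal position. The only remaining care point is to confirm that "greedy stops" coincides with the definition of maximal. This holds because the algorithm halts exactly when a full traversal finds no white vertex of degree $d$, and that is precisely the definition of a maximal position. I do not expect any real obstacle.
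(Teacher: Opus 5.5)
Your proof is correct and follows the paper's route. The paper deduces this corollary directly from Corollary~\ref{c:strong}, which in turn comes from Theorem~\ref{t:notcontain}: a greedy run is a build-up from the base position that ends in a maximal position, so it must be $[n]^d$ when the base position is a solution, and conversely a greedy run reaching $[n]^d$ reverses to a dismantling. Your extra checks on termination, and on a pass that adds nothing certifying maximality, are the routine details the paper leaves implicit.
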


In \cref{c:LSgreedy}, if we end up with a maximal position other than $[n]^d$,
then we started with a position that was not a
solution.  The maximal position is far from being determined by the
initial position.  We found a set of 25 independent vertices in $[5]^3$
from which different build-ups reach maximal positions of any size in
$\{37,38,39,40,43,46,56,57,58,59,60,61,63\}$.  We also found a set of
35 vertices in $[5]^3$ such that various build-ups resulted in maximal
positions of each size in $\{93,94,\dots,119\}$.  Figures~\ref{93} and
\ref{119} show the smallest and largest maximal positions for this
example. The numbers in shaded squares show the order in which the cubes are
added. Bold numbers in white squares are the degrees of white vertices in the final
position.  The absence of a bold $3$ certifies that no more moves are
possible.

\begin{figure}[ht]
  \begin{center}
    \includegraphics[width=0.85\textwidth]{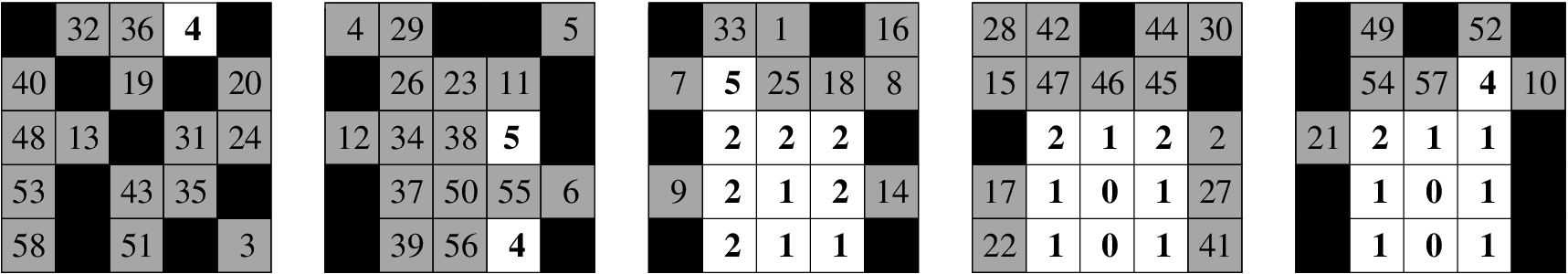}
    \caption{\label{93}The smallest maximal position in the five layers of $[5]^3$.}
  \end{center}
\end{figure}

\begin{figure}[ht]
  \begin{center}
    \includegraphics[width=0.85\textwidth]{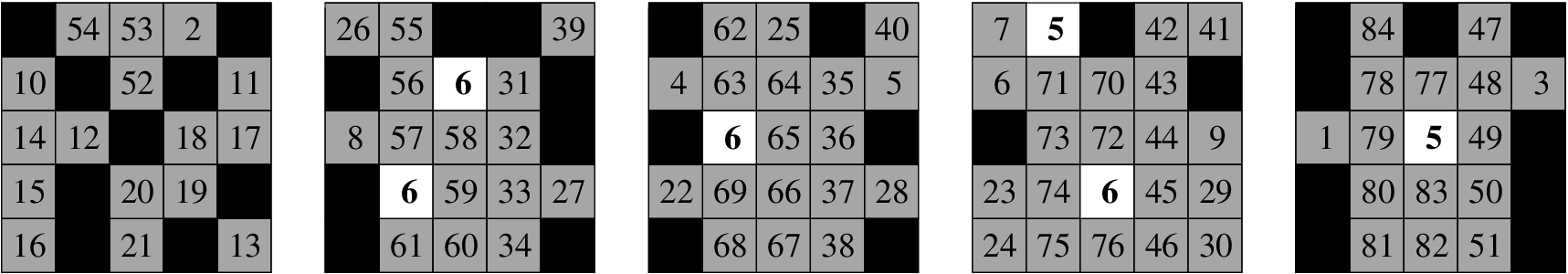}
    \caption{\label{119}The largest maximal position in the five layers of $[5]^3$.}
  \end{center}
\end{figure}
These examples contrast sharply with
\cref{c:strong}.  In \pref{p:uniqmax} below, we will see another
situation where only one maximal position can be reached.

Next we prove another conjecture from \cite{hj2011}.
It states that solutions are not only stable, but in some sense the
build-up is unique.

\begin{theorem} \label{anytwo}
Let $P_1$ and $P_2$ be any two positions, and let 
$u_1^*, u_2^*,\dots, u_{f}^*$ and $v_1^*, v_2^*,\dots, v_{f}^*$,
be any two build-ups from $P_1$ to $P_2$. There exists some
permutation $\sigma$ of $\{1,2,\dots,f\}$ such that $u_{\sigma(i)}^*=v_i^*$
for $1\le i\le f$.
\end{theorem}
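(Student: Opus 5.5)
The plan is to reuse the exchange argument from the second half of the proof of \tref{t:notcontain}. First observe that both build-ups add exactly the vertices of $P_2\setminus P_1$, each exactly once. Each move adds a vertex that is absent, so no vertex is added twice, and the final position is $P_2$ in both cases. In particular $f=|P_2\setminus P_1|$, and $\{u_1,\dots,u_f\}=\{v_1,\dots,v_f\}$ as sets of vertices. So there is a unique permutation $\sigma$ with $u_{\sigma(i)}=v_i$. It remains to show $N(u_{\sigma(i)})=N(v_i)$ for every $i$, that is, that each vertex is added with the same neighbour set in both build-ups.

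Suppose not. Choose the smallest index $i$ such that $u_i=v_k$ but $u_i^*\neq v_k^*$. Both $N(u_i)$ and $N(v_k)$ have exactly $d$ elements and they differ, so there is some $x\in N(u_i)\setminus N(v_k)$.

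First, $x\notin P_1$. Any neighbour of $v_k$ lying in $P_1$ is present when $v_k$ is added, so it belongs to $N(v_k)$. Hence $x=u_j$ for some $j<i$, since $N(u_i)$ consists of vertices present just before $u_i$ is added.

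By minimality of $i$, we have $u_j^*=v_l^*$ for the index $l$ with $v_l=u_j$. Because $j<i$, the vertex $u_i=v_k$ was absent when $u_j$ was added, so $v_k\notin N(u_j)=N(v_l)$. But $v_k$ and $v_l=x$ are adjacent. So if $k<l$, then $v_k$ would be present when $v_l$ is added and would lie in $N(v_l)$; therefore $l<k$. Then $x=v_l$ is present when $v_k$ is added, so $x\in N(v_k)$, contradicting the choice of $x$. Hence no such $i$ exists, and $u_{\sigma(i)}^*=v_i^*$ for all $i$.

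The only delicate step is the middle one: excluding $x\in P_1$, and then deducing $l<k$ from the adjacency of $x$ and $u_i$ together with the equality of the recorded neighbour sets $N(u_j)=N(v_l)$. This is the same bookkeeping as in \tref{t:notcontain}, so I expect no real obstacle.
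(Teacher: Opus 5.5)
Your proposal is correct and is essentially the paper's argument: take the first $u_i^*$ whose recorded neighbour set disagrees with that of the same vertex $v_k$ in the other build-up, pick an offending neighbour $x=u_j$ with $j<i$ and $u_j^*=v_l^*$, and get a contradiction from the relative order of $v_k$ and $v_l$. The paper instead chooses $x$ to occur after $v_k$ (so $l>k$) and contradicts $v_k\notin N(v_l)$, whereas you first force $l<k$ and then contradict $x\notin N(v_k)$; this is only a reordering, and your explicit check that both sequences add exactly the vertices of $P_2\setminus P_1$ is a detail the paper leaves implicit.
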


\begin{proof}
Assume to the contrary, that there exists a $j$ for which 
$u_j^*\neq v_i^*$ for $1\le i\le f$.
Let us fix $j$ to be the smallest such index. 
Since both sequences reach the same position, $u_j=v_k$ for some $k$. 

Suppose $u_j^*=(u_j,\{x_1,\dots,x_d\})$. By assumption, $u_j^*\neq v_k^*$ so
at least one of $x_1,\dots,x_d$ must occur after $v_k$ in the sequence $v_1,\dots,v_f$.
Without loss of generality, suppose that $x_d$ does. Now $x_d$ is not in $P_1$,
but is in $N(u_j)$, so $x_d=u_h$ for some $h<j$. By choice of $j$, it follows
that $u_h^*=v_e^*$ for some $e$. Note that $e>k$ by choice of $x_d$.
Now $v_k=u_j\notin N(u_h)=N(v_e)$ since $h<j$, but this
contradicts the fact that $v_k\in N(v_e)$ as $k<e$.
\qed \end{proof}

Of course, a similar statement holds for any two dismantlings from 
$P_2$ to $P_1$, since a dismantling sequence is just a build-up sequence
in reverse.

In bootstrap percolation, sites in $[n]^d$ are initially declared
active independently with probability $p$.  At subsequent steps, an
inactive site becomes active if it has at least $d$ active neighbours.
In the modified bootstrap percolation model, sites in $[n]^d$ are
initially declared active independently with probability $p$. At
subsequent steps, an inactive site becomes active if it has at least
one active neighbour in each of the $d$ directions. An active site
remains active forever.  In the introduction we mentioned several
differences between our build-up process and bootstrap
percolation. However, there is an important case in which they are
essentially the same.

\begin{proposition}\label{p:uniqmax}
  If $C$ is any convex position, then the same maximal position will
  ultimately be reached from $C$ by any sequence of moves
  regardless of whether they are dictated by our build-up process, by
  bootstrap percolation, or by modified bootstrap percolation.
\end{proposition}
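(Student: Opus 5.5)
The plan is to show that, starting from a convex position, the three rules coincide at every step. Then I would use the standard monotone-closure argument for bootstrap percolation to get uniqueness of the final position.

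\emph{Step 1 (local equivalence in convex positions).} Let $D$ be a convex position and $v$ a white vertex. If $v$ had black neighbours on both sides of it in some line, then $v$ would be a white cube between two black cubes in that line, contradicting convexity. Hence $v$ has at most one black neighbour in each of the $d$ directions, so its degree is at most $d$. Consequently, for a white vertex of a convex position the following three conditions are equivalent: having exactly $d$ black neighbours (our build-up), having at least $d$ black neighbours (bootstrap percolation), and having at least one black neighbour in each of the $d$ directions (modified bootstrap percolation). Each of them says that $v$ has exactly one black neighbour in each direction, which is also exactly a balanced move.

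\emph{Step 2 (convexity is preserved along any of the processes).} By \lref{convex}(0), any build-up from $C$ produces only convex positions. For the bootstrap versions, vertices may be added simultaneously, so I would serialise: add the simultaneously activated vertices one at a time. I expect this serialisation to be the main point needing care, and I would handle it by induction. At each intermediate stage the current position is convex, by \lref{convex}(0) and the inductive hypothesis. Each remaining vertex of the batch already had at least $d$ black neighbours in the earlier position, and degrees only increase. By Step~1 its degree is at most $d$, so it now has exactly $d$ black neighbours. Thus it is a legal build-up move. It follows that every bootstrap or modified-bootstrap evolution from $C$ is a (serialised) build-up from $C$, and conversely every build-up from $C$ is a valid evolution under either bootstrap rule.

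\emph{Step 3 (uniqueness of the terminal position).} Let $\overline{C}$ denote the bootstrap closure of $C$: the intersection of all positions containing $C$ that have no white vertex with at least $d$ black neighbours. This is the well-known unique final state of bootstrap percolation, because the rule is monotone. Now take any maximal position $M$ reached from $C$ by our build-up. By Step~2, $M$ is obtained from $C$ by bootstrap-legal additions, so $M\subseteq\overline{C}$ (by induction, each added vertex has at least $d$ black neighbours inside $\overline{C}$, so it lies in $\overline{C}$). Also $M$ is convex, and by maximality no white vertex of $M$ has exactly $d$ black neighbours. By Step~1 this means no white vertex of $M$ has at least $d$ black neighbours, so $M$ is bootstrap-closed and therefore $M\supseteq\overline{C}$. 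Hence $M=\overline{C}$.

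The same argument applies verbatim to modified bootstrap percolation, since on convex positions its rule is identical to the others by Step~1; so its final state is also $\overline{C}$. Therefore all three processes, in any order, terminate in the same maximal position, which proves the proposition.
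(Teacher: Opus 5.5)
Your proof is correct and is essentially the paper's argument. You use the same convexity bound (a white vertex has at most one black neighbour per line, so the three rules coincide), Lemma~\ref{convex}(0) to preserve convexity, and a monotonicity argument for uniqueness; your closure $\overline{C}$ is just the paper's ``first vertex added in the build-up to $D$ but missing from $D'$'' comparison, recast as a least-closed-superset statement. The only cosmetic difference is that you serialise each bootstrap round and apply the degree bound at every intermediate stage, whereas the paper first shows directly that the round's new set $S$ is independent.
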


\begin{proof}
  Let $S$ be the set of white vertices that have at least $d$ black
  neighbours in $C$.  Since $C$ is convex, no white vertex can have
  more than one black neighbour in any line of $C$. Hence every vertex
  in $S$ has exactly $d$ black neighbours, and they are in $d$
  orthogonal directions. Thus $S$ is the set of sites that would be
  filled by a single step of either bootstrap percolation or modified
  bootstrap percolation. Moreover, the vertices in $S$ are
  independent; for suppose that two of them, $u$ and $v$ were
  neighbours.  Since they are in $S$, they both have a black
  neighbour in the line that contains $u$ and $v$. But these black
  neighbours would necessarily have the white vertices $u$ and $v$
  between them, which breaches the assumption that $C$ is convex. So
  $S$ is an independent set and all vertices in it can be added (in
  any order, and using only balanced moves) by our build-up
  process. By \lref{convex}, $C\cup S$ is convex, so we can apply the
  same argument again.

  Let $D$ be the maximal position reached from $C$ by bootstrap
  percolation (or modified bootstrap percolation). By induction using
  the above argument we know that $D$ can be reached by build-up from
  $C$. So it only remains to show that no other maximal position $D'$
  can be reached by build-up from $C$. Suppose $w$ is the first vertex
  that was added in the build-up to $D$ but is not present in
  $D'$. Now $w$ has $d$ neighbours in $D'$, namely the neighbours
  that were used to add $w$ in the build-up to $D$. Also $w$ cannot
  have more than $d$ neighbours in $D'$, since $D'$ is convex by
  \lref{convex}. So $w$ is available as a move in $D'$, contradicting
  the maximality of $D'$. We conclude that $w$ does not exist, meaning
  that $D$ is a subset of $D'$. By a symmetric argument $D'$ is a
  subset of $D$, so they are in fact equal. In other words, from any
  convex position there is a unique maximal position that can be
  reached by building-up.
\qed \end{proof}

\section{Additional geometric properties}\label{s:geom}

In this section we consider further properties of solutions, particularly
their sections and projections. For this purpose, it is natural to assume
$d>2$.

We start by considering the number of black vertices in a section.
Certainly, a section cannot consist of white vertices only, since then
the last vertex removed from the section in a hypothetical dismantling
would have degree at most 2, a contradiction.  So each section
contains at least one black vertex.  For facial sections we can say
something much stronger using the Perimeter Lemma.

\begin{lemma}\label{facial}
For any solution every facial section of $[n]^d$ contains at least $n^{d-2}$ black vertices.
\end{lemma}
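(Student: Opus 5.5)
We would prove \lref{facial} by running the Perimeter Lemma argument of \pref{p:opti} \emph{inside} the facial section, one dimension lower, and reading the process as a build-up. Fix a solution $B$ and a facial section $S$; by symmetry take $S=\{x\in[n]^d : x_1=1\}$. Then $S$ is a copy of $[n]^{d-1}$. Each vertex of $S$ has all its neighbours in $S$ except exactly one, namely the vertex obtained by changing $x_1$ from $1$ to $2$; there is no neighbour with $x_1=0$ because the section is facial. This is the only place the facial hypothesis enters, and it is the key point of the proof.

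Since $B$ is a solution, there is a build-up from $B$ to $[n]^d$. For any set $A\subseteq S$ of black vertices, define the \emph{section perimeter} $p(A)$ as the number of $(d-2)$-dimensional faces of the unit cubes of $A$, viewed as cubes in $[n]^{d-1}$, that are not shared with another cube of $A$. Thus
\[
p(A)=2(d-1)|A|-2e(A),
\]
where $e(A)$ is the number of edges of the subgraph of $[n]^{d-1}$ induced by $A$. We track $p(\,\cdot\,)$ of the black part of $S$ throughout the build-up.

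Initially the black part of $S$ is $B\cap S$. Its vertices are independent because $B$ is a base position, so $p(B\cap S)=2(d-1)|B\cap S|$. At the end the black part is all of $S$, and $p(S)=2(d-1)n^{d-2}$ is the boundary of the $(d-1)$-dimensional box. Moves that add vertices outside $S$ do not change $p$. Now suppose a move adds $v\in S$ using exactly $d$ black neighbours. At most one of these lies outside $S$, so $v$ has $k\ge d-1$ black neighbours inside $S$. The change in perimeter is then $2(d-1)-2k\le 0$, so $p$ never increases. Hence
\[
2(d-1)|B\cap S| \;\ge\; 2(d-1)n^{d-2},
\]
which gives $|B\cap S|\ge n^{d-2}$, as claimed.

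The only step needing care is the monotonicity: we must check that the out-of-section neighbour count is at most one. This is precisely what fails for non-facial sections, where a vertex has two neighbours outside the section and the perimeter can grow. The rest is routine bookkeeping identical to \pref{p:opti}.
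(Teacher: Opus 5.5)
Your proof is correct and is essentially the paper's argument. The paper counts edges during the dismantling: each white vertex of the facial section $F$ uses $d$ edges, of which at most one leaves $F$, so $(d-1)(n^{d-1}-n^{d-2})+n^{d-1}-x\ge d(n^{d-1}-x)$; your monotonicity of the section perimeter $2(d-1)|A|-2e(A)$ under build-up is the same double count written in perimeter form.
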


\begin{proof} 
Assume there is a given solution, and the number of black vertices is
$x$ in a facial section $F$.  During the dismantling, we have to
remove the white vertices of $F$.  Let us enumerate the available
edges for this process.  There are $(d-1)(n^{d-1}-n^{d-2})$ edges within $F$, and
there are $n^{d-1}-x$ edges between the white vertices of $F$ and the
section adjacent to $F$.  In each step, the removal of a white vertex
uses $d$ edges.  Therefore, 
$(d-1)(n^{d-1}-n^{d-2})+n^{d-1}-x\ge d(n^{d-1}-x)$, which implies
$x\ge n^{d-2}$.  \qed
\end{proof}

The above result is a subcase of condition (iii) in Hoffman's
paper~\cite{hof}.  An analogous argument applied to a non-facial
section gives no extra information.  On the other hand, we get a
stronger result assuming that the dismantling is balanced.  By
\pref{p:convperfLS} the solution is then perfect and convex, and it
follows that each section contains precisely $n^{d-2}$ black vertices.
We note that the converse does not hold. \fref{f:npersect} shows an
example for $n=4$ and $d=3$ where each section contains precisely
$n^{d-2}=4$ black vertices but the solution is not perfect.

\begin{figure}[h]
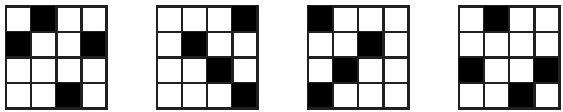

\[
\begin{array}{|p{6pt}|p{6pt}|p{6pt}|p{6pt}|}
\hline
&\blk&&\\\hline
\blk&&&\blk\\\hline
&&&\\\hline
&&\blk&\\\hline
\end{array}
\qquad
\begin{array}{|p{6pt}|p{6pt}|p{6pt}|p{6pt}|}
\hline
&&&\blk\\\hline
&\blk&&\\\hline
&&\blk&\\\hline
&&&\blk\\\hline
\end{array}
\qquad
\begin{array}{|p{6pt}|p{6pt}|p{6pt}|p{6pt}|}
\hline
\blk&&&\\\hline
&&\blk&\\\hline
&\blk&&\\\hline
\blk&&&\\\hline
\end{array}
\qquad
\begin{array}{|p{6pt}|p{6pt}|p{6pt}|p{6pt}|}
\hline
&\blk&&\\\hline
&&&\\\hline
\blk&&&\blk\\\hline
&&\blk&\\\hline
\end{array}
\]
\caption{\label{f:npersect}The four planar sections of an imperfect 
solution in $[4]^3$ with $4$ black vertices per section.}
\end{figure}

In applications such as those mentioned in \cite{hj2011}, there might
be a condition on the location of the black cubes.  One strong
condition is to restrict the black cubes to being in $d$ sections.
For $d=3$ and $n=2^t$ for a positive integer $t$, we now show a
solution satisfying this extra condition.  The construction relies on
the following observation.  If there are three mutually orthogonal
sections in which all cubes are present, then the rest of $[n]^3$ can
be built-up.  What is left to do is the selection of a base position
in this restricted space of three sections.

\begin{lemma}[The corridor idea]
Let $n=2^t$ for $t\ge 1$. There exists a solution $I_n$ in
which the black cubes are contained in three orthogonal facial
sections of $[n]^3$.
\end{lemma}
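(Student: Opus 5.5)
We construct $I_n$ recursively in $t$. The target is a set $I_n$ of $n^2$ independent black cubes inside $F=\{x=1\}\cup\{y=1\}\cup\{z=1\}$ from which all of $F$ can be built up using only cubes of $F$. Once $F$ is complete, the observation before the lemma finishes the job. Explicitly, we add the cubes $(a,b,c)$ with $a,b,c\ge2$ in order of increasing $a+b+c$. When $(a,b,c)$ is added, its black neighbours are exactly $(a-1,b,c),(a,b-1,c),(a,b,c-1)$, since every neighbour with a larger coordinate is still white. Each such move is balanced.

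A counting check shows that the build-up of $F$ alone must use every edge of $F$, so the target is tight. $F$ contains $3n^2-3n+1$ cubes, so $2n^2-3n+1$ of them are added, each using $3$ edges, which is $6n^2-9n+3$ edges in total. The three faces contain $6n(n-1)$ edges, of which the $3(n-1)$ edges on the coordinate axes are counted twice. This leaves exactly $6n^2-9n+3$ edges. Hence no edge of $F$ may be wasted: every white cube of $F$ must have exactly three black neighbours in $F$ at the moment it is added. By \cref{c:LSgreedy}, it is enough to exhibit one valid build-up order.

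Base case $n=2$. We take $I_2=\{(1,1,1),(2,2,1),(2,1,2),(1,2,2)\}$, which is independent. The cubes $(1,2,1),(2,1,1),(1,1,2)$ each have exactly three black neighbours and are pairwise non-adjacent, so we add them. This completes $F$, and then $(2,2,2)$ is added.

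Inductive step, from $n$ to $2n$. We split $[2n]^3$ into eight blocks of side $n$. The corner block $A=[n]^3$ receives a copy of $I_n$. The three blocks that share a face with $A$ and meet $F$ in two faces (for example $B=[n+1,2n]\times[n]\times[n]$) receive copies of $I_n$ with the part in their inner face removed. For $B$ this means deleting the cubes with $x=n+1$. The three blocks meeting $F$ in one face receive a planar pattern in that face. The idea behind this is that once $A$ is full, the section $x=n$ of $A$ acts on $B$ exactly as a full facial section would, namely as the ``$x=1$'' face of a corner configuration.

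The main obstacle is that the build-up rule is exact, so adding black cubes is not monotone. A cube of $B$ in the section $x=n+1$ gets one extra neighbour from $A$. If it ever receives four black neighbours, it is stuck. To prevent this, I would leave ``corridors'': strips of white cubes along the shared boundaries, such as the line $x=n+1,\ y=1$. These let the blocks grow independently, and the corridors are then filled at the end by cubes that have exactly one neighbour from each side and one along the corridor. I would first verify the transplanted order for $B$: run $I_n$'s build-up order on $B$, check that each cube of section $x=n+1$ sees precisely the neighbour in $A$ plus two neighbours in the section, and reorder where needed. I would then verify, by the same tight edge count applied blockwise, that the black count totals $(2n)^2$. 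Finally, the remaining blocks are done the same way, and independence is checked across block boundaries.
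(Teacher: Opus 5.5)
There is a genuine gap. Your base case, your edge count for $F$, and your final filling of the cubes with all coordinates at least $2$ in order of increasing $a+b+c$ are correct. The inductive step, however, which is the entire content of the lemma, is only a plan. The pattern on the three one-face blocks is never specified, no build-up order is given, and the verification is deferred (``reorder where needed'').

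The mechanism you describe also fails as stated. The copy of $I_n$ in $B=[n+1,2n]\times[n]\times[n]$ occupies layers $n+1,\dots,2n$. So the full section $x=n$ of $A$ lies one layer outside it and does not make $B$ a corner configuration. After deleting $B$'s black cubes with $x=n+1$, the cubes of $F$ in that layer are exactly your corridor cubes, e.g.\ $(n+1,y,1)$ with $2\le y\le n$. Once the parts of $F$ on both sides are full, such a cube has exactly two black neighbours, $(n,y,1)$ and $(n+2,y,1)$. Its other neighbours are the interior cube $(n+1,y,2)$ and two corridor cubes. So while the interior is still white, as your scheme requires, a corridor containing no black cube can never be entered. ``One neighbour from each side and one along the corridor'' presupposes a black seed in each corridor, and you never place one. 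Finally, your hypothesis that $F$ can be built up inside $F$ from $I_n$ is too weak to transplant. It does not guarantee that two faces of the copy in $B$ can still be built up once the black cubes of its third face are deleted.

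The missing idea, which is the paper's argument, is that the recursion is planar. Apart from $(1,1,1)$, the black cubes of $I_{2n}$ lie in the three $(2n-1)\times(2n-1)$ squares obtained by removing the axis lines from the faces. Each such square consists of four corner copies of the $(n-1)\times(n-1)$ face pattern of $I_n$, separated by a white cross of width one whose centre, e.g.\ $(1,n+1,n+1)$, is black. The build-up proceeds in four stages:
\begin{enumerate}
\item The sub-squares are built up in isolation, since all their outside neighbours (cross, axis lines, interior) are white.
\item Each arm of the cross is filled outward from its black centre. Each cross cube then has one black neighbour in each flanking sub-square and one along the arm.
\item The axis lines are filled outward from $(1,1,1)$.
\item The interior is filled as you describe.
\end{enumerate}
The induction is on this planar statement about one square.

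Incidentally, this $I_{2n}$ decomposes exactly along your blocks. It restricts to $I_n$ on $A$ and to $I_n$ minus its inner face on each two-face block. On each one-face block it is a translate of one face of $I_n$, with the translate of $(1,1,1)$ being the seed. This is forced by your count $(2n)^2=n^2+3(n^2-a)+3c$, i.e.\ $c=a$, where $a$ is the number of black cubes of $I_n$ in a face. So your skeleton and corridors are right; what is missing are the seeds and the planar induction hypothesis that makes the build-up verifiable.
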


\begin{proof}
We use induction on $t$.  For $t=1$, there is only one solution.  For
any other $t$, let $I_{2^t}$ contain the vertex $(1, 1, 1)$. Let the
other black cubes be contained in squares of size $(2^t-1)\times
(2^t-1)$ in the facial sections $(1,y,z)$, $(x,1,z)$, $(x,y,1)$, where
$x,y,z\ge 2$.  The positions of the black cubes in these sections are
defined recursively.  Consider four copies of the example of size
$(2^{t-1}-1)\times (2^{t-1}-1)$ in the corners plus an extra black
cube in the centre, see \fref{pepita}.

We can build-up $[n]^3$ from this base position as follows.  The four
smaller parts can be built-up by the induction hypothesis.  Once they
are done, the line starting from the middle black cube can be
built-up.  After this step the lines starting from $(1,1,1)$ can be
built-up.  We now have three orthogonal facial sections filled.  We
can easily build-up the rest of $[n]^3$: for instance line by line from
2 to $n$ on level 2, and then repeating this level by level from 3 to
$n$.  
\qed \end{proof}

\begin{figure}[ht]
  \begin{center}
    \includegraphics[width=0.75\textwidth]{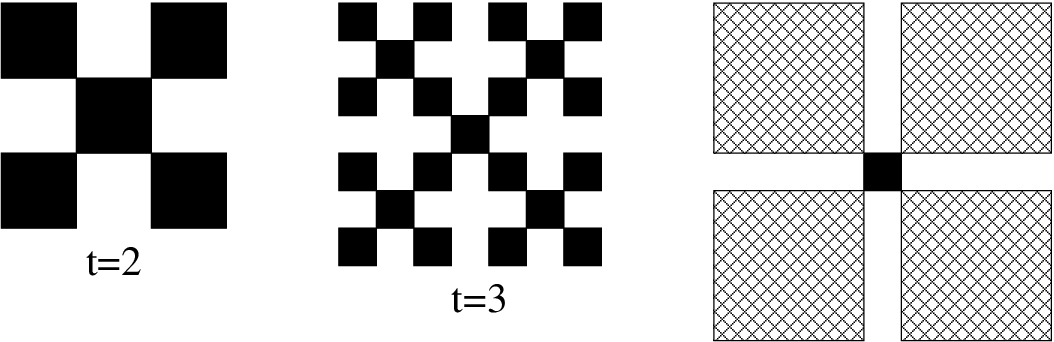}
    \caption{\label{pepita}The recursive idea 
      to create $(n-1)\times (n-1)$ squares}
  \end{center}
\end{figure}

We can look at this result from another point of view.  In the case of
perfect base positions, the $d$ orthogonal projections cover an `area'
of $dn^{d-1}$, which is clearly the maximum possible. At the other
extreme, define $s_d(n)$ to be the minimum possible area obtained from
the $d$ orthogonal projections of a solution to the faces of $[n]^d$.
The following conjecture, posed in \cite{hj2011}, says the above
solution within three planes achieves the minimal value $s_3(n)$.

\begin{conjecture}
$s_3(n)\ge n^2+6n-4$ for all $n$. 
\end{conjecture}

Using the Perimeter Lemma again, we can show that the leading term in 
this conjecture is correct.

\begin{proposition}\label{p:lower}
If $B$ is a solution in $[n]^d$ then each of its $d$
orthogonal projections contains at least $\frac1d n^{d-1}+\frac{d-1}{d} n^{d-2}$ hypercubes of dimension $d-1$.
\end{proposition}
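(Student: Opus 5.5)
We prove the bound by a perimeter-style edge count, in the spirit of \pref{p:opti} and \lref{facial}, carried out separately for a fixed direction $r$. Fix a dismantling of $[n]^d$ to $B$. There are $N=n^d-n^{d-1}$ moves, one per white vertex, and each move deletes exactly $d$ edges. Every edge of $[n]^d$ is deleted exactly once, because $B$ is independent. For a move that removes $v$, let $e_i(v)\in\{0,1,2\}$ be the number of deleted edges lying in direction $i$, so that $\sum_i e_i(v)=d$. Let $p_r$ be the size of the projection of $B$ along $r$, and let $m_r=n^{d-1}-p_r$ be the number of $r$-lines that contain no black vertex. The statement is then equivalent to
\[
m_r\le \tfrac{d-1}{d}\bigl(n^{d-1}-n^{d-2}\bigr).
\]

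The first step is to record two exact identities. The number of edges in direction $r$ is $(n-1)n^{d-1}=N$, so $\sum_v e_r(v)=N$, that is, $\sum_v(e_r(v)-1)=0$. Hence the number $Z_r$ of moves with $e_r=0$ equals the number $T_r$ of moves with $e_r=2$. The same holds in every direction.

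The second step bounds these quantities from below and above.

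\begin{itemize}
\item Lower bound: on an $r$-line with no black vertex, the last vertex removed has no $r$-neighbours left. So $Z_r\ge m_r$, and therefore $T_r\ge m_r$.
\item Upper bound: a move with $e_r(v)=2$ removes a vertex strictly inside its $r$-line, so $v$ does not lie in either facial section perpendicular to $r$. Moreover, since $\sum_i(e_i(v)-1)=0$, such a move has $e_i(v)=0$ for some $i\neq r$.
\end{itemize}

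The plan is to combine these facts over all $d$ directions. For each move, the number of directions with value $2$ equals the number with value $0$. So $\sum_i T_i$ counts "paired defects", and a move carrying a $2$ in direction $r$ consumes a $0$ in another direction. The facial sections give the correction term: the $2n^{d-1}$ vertices in the two facial sections perpendicular to $r$ can never have $e_r=2$. Applying \lref{facial} to these sections and counting their edges as in its proof, I expect to obtain
\[
T_r\le (d-1)\cdot\frac{N-(\text{facial contribution})}{d},
\]
with equality in the extremal pattern. Together with $T_r\ge m_r$, this should give the displayed bound on $m_r$, and the term $-n^{d-2}$ should come precisely from the facial sections.

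The main obstacle is this upper bound on $T_r$. A purely local bound fails: for example, $e_r\le 2$ alone only yields $m_r\le (n-1)n^{d-1}/2$, which is the correct bound only when $d=2$. My first attempt would be to refine the Perimeter Lemma count restricted to the union $U$ of the empty $r$-lines. The vertices of $U$ must be removed using $d|U|=dnm_r$ edges. The available edges are the $m_r(n-1)$ internal $r$-edges together with $n$ times the number of projection edges incident to the empty cells. I would then bound the latter count by the edges of $[n]^{d-1}$ not induced by the $p_r$ occupied cells. If that bound proves too weak, I would add the conservation identity $Z_i=T_i$ for the other directions, applied to the lines of $U$ orthogonal to $r$.
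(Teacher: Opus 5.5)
\textbf{There is a genuine gap: the upper bound on $T_r$ is never proved, and the route you sketch toward it cannot deliver the claim.}

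Your identities are correct: $T_r=Z_r$, $Z_r\ge m_r$, and the facts about moves with $e_r=2$. But the argument stops exactly where the work has to be done.

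\emph{The expected inequality is at the wrong scale.} Since $N=(n-1)n^{d-1}$, a bound of the form $T_r\le\frac{d-1}{d}\bigl(N-(\text{facial contribution})\bigr)$ is of order $n^d$. The target is $\frac{d-1}{d}(n^{d-1}-n^{d-2})=\frac{d-1}{d}\cdot\frac{N}{n}$, so your bound is a factor $n$ too large. The same slip appears when you call $m_r\le (n-1)n^{d-1}/2$ correct for $d=2$; there the target is $(n-1)/2$.

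\emph{You are bounding the wrong quantity.} $T_r=Z_r$ counts every move anywhere in $[n]^d$ that uses no $r$-edge, and it can exceed $m_r$. For example, in $[5]^2$ with black cells $(1,1),(1,3),(1,5),(3,2),(5,4)$, every column is occupied, yet the moves at $(1,2)$ and $(1,4)$ use no vertical edge. So bounding $T_r$ by the target is a stronger statement than the proposition, and nothing in your outline controls it.

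\emph{Your fallback count on $U$ is too weak.} Every cube on an empty line except the first one built may use the $(n-1)m_r$ internal $r$-edges. The resulting inequality $dn\,m_r\le (n-1)m_r+n(d-1)(n-1)n^{d-2}$ only gives $m_r$ up to about $(n-1)n^{d-2}$. That corresponds to a projection of size about $\frac{d}{d-1}n^{d-2}$, not $\frac1d n^{d-1}$.

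\textbf{The missing idea} is to use only the first cube placed, in the build-up, on each empty $r$-line, and to count in the projection $[n]^{d-1}$ rather than in $[n]^d$.

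At that moment the cube has no $r$-neighbour. So its $d$ neighbours lie in the $d-1$ other directions, and they project to $d$ distinct neighbours of its image, all already covered. Hence the projection grows from $x$ cells to all of $[n]^{d-1}$ by adding cells that each have at least $d$ covered neighbours. Charging each edge of $[n]^{d-1}$ to its later-covered endpoint gives $d\,m_r\le (d-1)(n-1)n^{d-2}$, which is exactly the claim.

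This is the paper's proof, phrased there via perimeter. The $(d-2)$-dimensional perimeter of the projection starts at most $2(d-1)x$, drops by at least $2$ each time a new cell is covered, and ends at $2(d-1)n^{d-2}$.
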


\begin{proof}
Consider a projection of $B$.
Let $x$ be the number of $(d-1)$-dimensional hypercubes in the projection, and
let $p$ be their $(d-2)$-dimensional perimeter.
Now $p\le 2(d-1)x$, with equality if no pair of the $x$ hypercubes 
have a $(d-2)$-dimensional face in common.
Consider how the projection changes as we build-up $[n]^d$ from
the base position.  The only change happens when we add a cube whose
projection was previously empty. It must have $d$ black neighbours
in the projection by the degree condition.  Therefore, the
perimeter decreases by two.  When $[n]^d$ is
completed, the projection is the whole of $[n]^{d-1}$, 
with perimeter $2(d-1)n^{d-2}$.  
Hence $2(d-1)x\ge p=2(d-1)n^{d-2}+2(n^{d-1}-x)$.  Therefore, 
$x\ge\frac1d n^{d-1}+\frac{d-1}{d} n^{d-2}$.
\qed
\end{proof}

In particular, \pref{p:lower} implies that $s_3(n)\ge n^2+2n$.

\section{Enumeration}\label{search}

In this section we give the results of computer enumerations of
solutions for small orders in 3 dimensions, and also show that most
Latin hypercubes (in any dimension) do not correspond to solutions. We
start by considering the 3-dimensional case. By \pref{p:convperfLS}, each
perfect solution corresponds to a Latin square. The number of Latin
squares of order $n$ grows very rapidly as a function of $n$, and is
known only for $n\le11$, see \cite{LS11}. However, \cref{c:LSgreedy}
does at least provide a fast algorithm for testing whether a given
Latin square yields a solution.

A base position $B$ corresponds to a two-colouring of $[n]^3$, where the
vertices in $B$ are black, and the other vertices are white. The
isometry group of the cube has 48 elements \cite{cc}.  Two base positions $B_1$
and $B_2$ are {\it isometric} if and only if there exists an element
of the isometry group of the cube mapping $B_1$ to $B_2$.  Therefore,
any base position may be isometric to at most 48 positions.  If the
position has some symmetry, then we get fewer isometric positions.

Let $S_n$ denote the symmetric group on $n$ letters.  There is a
natural action of the direct product $S_n\times S_n\times S_n$ on the
set of Latin squares of order $n$, which permutes the rows, columns
and symbols of the Latin square. This action is called {\it isotopism}
and its orbits are called {\it isotopism classes}. There is another
operation called {\it conjugacy} in which $S_3$ acts on the Latin
squares by permuting the coordinates of each of the (row, column,
symbol) triples. The closure of an isotopism class under conjugacy is
known as a {\it species} or {\it main class}.

In the present setting, we are thinking of Latin squares as
corresponding to perfect base positions. In this setting, the rows,
columns and symbols of the Latin square correspond to the three
coordinates of cubes in the base position. Therefore, conjugacy of the
Latin square corresponds to an isometry of $[n]^3$ that simply permutes
the $3$ coordinates. As we are interested in the first instance in
counting solutions up to isometry, it follows that it suffices for us
to take one representative of each species and consider all squares in
its isotopism class. We did this for each order $n\le6$, checking each
candidate with the greedy algorithm to test if it was a solution.
Every solution that we found was checked against previously stored
solutions.  If it was not isometric to something already in our
catalogue, then we added it.  In this way, we found all perfect
solutions of order $n\le6$ up to isometry. The number of such
solutions is given in \Tref{T:numsol}.  The Latin squares that produce
the perfect solutions of order 4 are given in \fref{f:LS4}.

\begin{table}[h]
\begin{centering}
\begin{tabular}{|c|cccccc|}
\hline
$n$&1&2&3&4&5&6\\
\hline
\# Latin squares&1&2&12&576&161280&812851200\\
\# perfect solutions&1&2&12&256&2688&148958\\
\# perfect solutions up to isometry&1&1&2&21&72&3697\\
\# all solutions&1&2&116&7134840&&\\
\# all solutions up to isometry&1&1&7&149955&&\\
\hline
\end{tabular}
\caption{\label{T:numsol}Data for small orders}
\end{centering}
\end{table}

\begin{figure}
\[\setlength{\arraycolsep}{1pt}
\begin{array}{ccccccc}
\left[\begin{array}{cccc}
1&2&3&4\\
2&1&4&3\\
3&4&1&2\\
4&3&2&1\\
\end{array}\right]&
\left[\begin{array}{cccc}
1&2&4&3\\
2&1&3&4\\
4&3&1&2\\
3&4&2&1\\
\end{array}\right]&
\left[\begin{array}{cccc}
1&3&4&2\\
3&1&2&4\\
4&2&1&3\\
2&4&3&1\\
\end{array}\right]&
\left[\begin{array}{cccc}
1&4&2&3\\
4&1&3&2\\
2&3&1&4\\
3&2&4&1\\
\end{array}\right]&
\left[\begin{array}{cccc}
1&4&3&2\\
4&1&2&3\\
3&2&1&4\\
2&3&4&1\\
\end{array}\right]&
\left[\begin{array}{cccc}
2&1&4&3\\
1&2&3&4\\
4&3&2&1\\
3&4&1&2\\
\end{array}\right]&
\left[\begin{array}{cccc}
1&4&3&2\\
4&1&2&3\\
2&3&4&1\\
3&2&1&4\\
\end{array}\right]
\\[-1ex]\\
\left[\begin{array}{cccc}
1&2&3&4\\
2&3&4&1\\
3&4&1&2\\
4&1&2&3\\
\end{array}\right]&
\left[\begin{array}{cccc}
1&2&4&3\\
2&4&3&1\\
4&3&1&2\\
3&1&2&4\\
\end{array}\right]&
\left[\begin{array}{cccc}
1&4&3&2\\
4&3&2&1\\
3&2&1&4\\
2&1&4&3\\
\end{array}\right]&
\left[\begin{array}{cccc}
2&1&3&4\\
1&3&4&2\\
3&4&2&1\\
4&2&1&3\\
\end{array}\right]&
\left[\begin{array}{cccc}
1&3&2&4\\
3&4&1&2\\
4&2&3&1\\
2&1&4&3\\
\end{array}\right]&
\left[\begin{array}{cccc}
2&1&4&3\\
1&3&2&4\\
3&4&1&2\\
4&2&3&1\\
\end{array}\right]&
\left[\begin{array}{cccc}
1&3&4&2\\
3&2&1&4\\
4&1&2&3\\
2&4&3&1\\
\end{array}\right]
\\[-1ex]\\
\left[\begin{array}{cccc}
1&3&2&4\\
3&4&1&2\\
2&1&4&3\\
4&2&3&1\\
\end{array}\right]&
\left[\begin{array}{cccc}
1&4&3&2\\
4&2&1&3\\
3&1&2&4\\
2&3&4&1\\
\end{array}\right]&
\left[\begin{array}{cccc}
2&1&4&3\\
1&3&2&4\\
4&2&3&1\\
3&4&1&2\\
\end{array}\right]&
\left[\begin{array}{cccc}
1&2&3&4\\
3&4&2&1\\
4&3&1&2\\
2&1&4&3\\
\end{array}\right]&
\left[\begin{array}{cccc}
1&2&4&3\\
4&3&2&1\\
3&4&1&2\\
2&1&3&4\\
\end{array}\right]&
\left[\begin{array}{cccc}
1&2&3&4\\
2&1&4&3\\
3&4&2&1\\
4&3&1&2\\
\end{array}\right]&
\left[\begin{array}{cccc}
1&2&4&3\\
2&1&3&4\\
4&3&2&1\\
3&4&1&2\\
\end{array}\right]
\end{array}
\]
\caption{\label{f:LS4}The perfect solutions of order $4$.}
\end{figure}

\begin{figure}[ht]
  \begin{center}
    \includegraphics[width=0.75\textwidth]{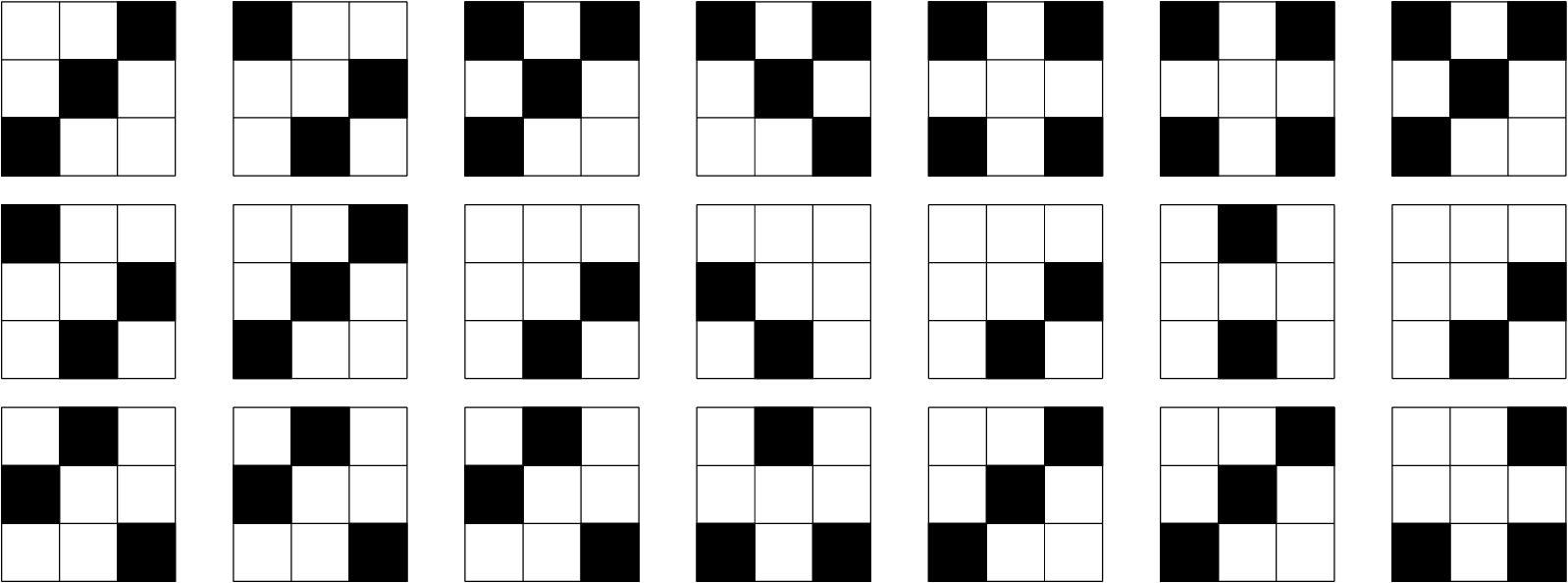}
    \caption{\label{f:all3x3}The seven solutions of order $3$ depicted vertically by level}
  \end{center}  
\end{figure}

In the course of the computations for orders $n\le6$, we observed
that every isotopism class contained at least one solution. We tested
whether this was also true for $n=7$ by a partial enumeration that
stopped examining an isotopism class when it found the first solution. 
We found that there is a unique isotopism class of order 7 that contains
no solutions. A representative of this class is:
\begin{equation*}\label{e:nosol}
\left[
\begin{array}{ccccccc}
1&2&4&5&6&7&3\\
2&1&3&4&7&6&5\\
7&3&2&1&5&4&6\\
3&4&6&2&1&5&7\\
4&7&5&6&3&1&2\\
5&6&7&3&4&2&1\\
6&5&1&7&2&3&4\\
\end{array}
\right].
\end{equation*}
This Latin square is isotopic to all 6 of its conjugates, so its species
consists of a single isotopism class. It has been written in semisymmetric
form, meaning that 3 of its conjugates are actually equal. The only isotopism
that maps it to itself is the identity.

\bigskip

The other enumeration that we did was to look for all solutions of
order $n\le4$, whether perfect or not. This was achieved by adding one
cube at a time, screening for isometry after each of the first five
cubes were added.  Each position with five cubes was extended in all
possible ways to get $n^2$ independent cubes. These were then tested
with the greedy algorithm (as justified by \tref{c:LSgreedy}) to see if
they were solutions. All solutions that were found were screened for
isometry. The number of solutions, up to isometry, is given in
\Tref{T:numsol}.  The $7$ solutions of order 3 are displayed in
\fref{f:all3x3}.  In both \fref{f:LS4} and \fref{f:all3x3}, exactly
one representative of each isometry class is given. Once we had
a catalogue of representatives of each isometry class, we found
the symmetries of these representatives. Applying the
orbit-stabiliser theorem then allowed us to calculate
the number of all solutions and number of perfect solutions,
as given in \Tref{T:numsol}.

It seems plausible on the basis of \Tref{T:numsol} to conjecture that
most Latin squares do not correspond to solutions. We next show that
something stronger is true. In our discussion, use of the word ``random''
will implicitly assume a uniform discrete distribution.  For
properties of random Latin squares, see \cite{CGW08} and the
references therein. To date there does not seem to be much work
on random Latin hypercubes of larger dimensions.

\begin{theorem}\label{t:mostLSfail} 
  Fix a dimension $d\ge2$. Let $\mathcal{R}_n$ denote the base
  position in $[n]^d$ corresponding to a random Latin hypercube of
  order $n$ and dimension $d-1$.  The probability that $\mathcal{R}_n$
  is a solution tends to $0$ as $n\rightarrow\infty$.
\end{theorem}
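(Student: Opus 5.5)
The plan is to find a \emph{local obstruction}: a finite pattern in a Latin hypercube $L$ whose presence prevents the base position $B_L$ from being a solution. I would then show that a random Latin hypercube contains this pattern with probability tending to $1$. By \pref{p:convperfLS}, any solution coming from $L$ is perfect, and every build-up from $B_L$ is balanced. Write $s(x)$ for the symbol of $L$ in cell $x\in[n]^{d-1}$. A white cube $(x,z)$ can only be added once two things hold. First, its neighbour $(x,z\pm1)$ in its own vertical line is present, so each vertical line fills outward from height $s(x)$. Second, for every direction $i\le d-1$, exactly one of $(x\pm e_i,z)$ is present. In particular, the first time anything is added at height $z$ near a cell requires a neighbouring cell already carrying the level $z$. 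Since each level $z$ initially meets every line of $[n]^{d-1}$ exactly once, the process is very rigid.

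\textbf{Step 1: the obstruction lemma.} I would look at the corner of the hypercube, where cubes have few neighbours. The corner cube $(1,\dots,1,1)$ has exactly $d$ neighbours, so if it is white it must be the last cube added among those neighbours. The cube $(1,\dots,1,z)$, for $1<z<n$, has $d+1$ neighbours, so in the build-up exactly one of its neighbours must still be absent when it is added. I would aim for a statement of the form: if the symbols $s(x)$ on a bounded set of cells near the corner cell $(1,\dots,1)$ stand in a prescribed relation of \emph{consecutiveness} (for example, $s(1,\dots,1)$ is an interior symbol and neither $s(1,\dots,1)\pm1$ occurs in a prescribed set of cells adjacent to the corner), then some corner or edge cube can never be added with exactly $d$ black neighbours. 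The way to prove this is the edge-counting argument of \pref{p:opti} and \lref{facial}, applied to a small box at the corner. Alternatively, one can take the first vertex of the box to be added and derive a contradiction with balancedness, as in \lref{convex}. Checking which pattern actually works is a finite verification that I would do by hand for small boxes, guided by the $n=7$ example above.

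\textbf{Step 2: probabilistic part.} The uniform distribution on Latin hypercubes is invariant under permuting symbols. So if $\pi\in S_n$ is a uniform random permutation independent of $L$, then $\pi\circ L$ is again uniform. The obstruction is a condition on whether certain symbols are \emph{consecutive}, which is not isotopism-invariant. So for any fixed $L$ I would compute the probability over $\pi$ that the obstruction is present. To make this probability tend to $1$, I would not use only the one corner. Instead I would use all $2^{d}$ corners, and more usefully the many cells along the edges and facial sections of $[n]^d$ where the analogous argument applies. This gives $\Theta(n)$ disjoint local sites with distinct symbol sets. For a random $\pi$, the events ``site $j$ is not obstructed'' each have probability bounded away from $1$ and are nearly independent, since they involve disjoint sets of $O(1)$ symbols. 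A second-moment or direct counting bound then shows that the probability no site is obstructed is $o(1)$, uniformly in $L$. Averaging over $L$ gives the theorem.

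\textbf{Main obstacle.} I expect the difficulty to be in Step 1, namely extracting a genuinely local obstruction that works for facial or edge cells away from the corner. Cells in the interior are too flexible, because the build-up can approach them from many directions. Random Latin hypercubes are poorly understood, so it is essential that the obstruction depends only on the labelling of symbols. That way the symbol-relabelling trick of Step 2 applies without any structural knowledge of random Latin hypercubes. If no single-site obstruction is found, my fallback is to use the corner cubes $(1,\dots,1,z)$ for all $z$ simultaneously. The column through the corner cell must grow outward from $s(1,\dots,1)$ using horizontal neighbours at every height. I would show this forces a chain of consecutive-symbol coincidences in the adjacent cells, and such a chain has vanishing probability under random relabelling.
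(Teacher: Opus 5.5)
\textbf{Where the proof breaks.} Your Step~1 carries the whole proof, and it is not there. You never exhibit an obstruction, and the kind you propose is not one.

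Already for $d=2$, which the theorem covers, take $\pi=(2,4,5,\dots,n,3,1)$, i.e.\ the base position $\{(x,\pi(x))\}$ in $[n]^2$. This $\pi$ is separable (it avoids $2413$ and $3142$), so 2-neighbour bootstrap percolation from it fills $[n]^2$: inductively, two filled squares meeting diagonally at a corner fill their bounding box. By \pref{p:uniqmax} it is therefore a solution.

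Yet the corner symbol $s(1)=2$ is interior, and the symbols $1,3$ sit in cells $n$ and $n-1$, outside any bounded set of cells near the corner. The corner column gets its levels $1$ and $3$ from horizontal neighbours that were built up starting from the far end of the row. So:
\begin{itemize}
\item your heuristic that adding at height $z$ near a cell needs a nearby cell carrying symbol $z$ is false;
\item your sample consecutiveness obstruction fails;
\item the fallback chain of consecutive symbols in adjacent cells is not forced either, since the corner cell and its neighbour carry $2$ and $4$.
\end{itemize}
A perimeter count on a small box cannot detect failure, because the white cubes in the box can acquire neighbours built up from far away. What genuinely blocks is a relative-order condition: non-separability, e.g.\ a $2413$ or $3142$ pattern among the symbols along a boundary line of cells. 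Showing that it blocks requires controlling the entire 2-neighbour bootstrap closure of a permutation. Without such a lemma, Step~2 has nothing to act on.

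\textbf{The missing idea.} The paper looks at a whole 2-dimensional face rather than a corner box. Let $T$ be the set of cubes whose first $d-2$ coordinates equal $n$.
\begin{itemize}
\item Every cube of $T$ has only $d-2$ neighbours outside $T$. So when it is added, at least two of its present neighbours lie in $T$.
\item Hence everything ever built in $T$ lies in the 2-neighbour bootstrap closure, inside $T$, of the black cubes of $T$.
\item Those black cubes form a permutation matrix. Your symbol-relabelling observation is exactly what makes this permutation uniformly random for each fixed $L$.
\end{itemize}
Shapiro and Stephens showed that a random permutation percolates in this model with probability tending to $0$; the percolating ones are counted by Schr\"oder numbers. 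This settles every $d\ge 2$ at once, uniformly in $L$.

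If you prefer your many-sites scheme, it can be run with the correct obstruction. Disjoint blocks of four consecutive cells along that boundary line each carry the pattern $2413$ with probability $1/24$ after random relabelling. You would still need to prove that such a pattern prevents 2-neighbour percolation.
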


\begin{proof}
We start with the two-dimensional case.  In \cite{kings}, Shapiro and
Stephens studied a percolation process in which cells with two or more
occupied neighbours became filled. They showed that if the starting
configuration corresponds to a random permutation, then the
probability of completion tends to zero. Since a permutation is just a
Latin hypercube of dimension $1$, and corresponds to a convex
position, our result for $d=2$ follows from \pref{p:uniqmax}.

Next we reduce the case of general $d$ to the two-dimensional case.
Consider the set $T$ of the $n^2$ cubes in $[n]^d$ that have their
first $d-2$ coordinates all equal to $n$. The cubes in $T$ that are
black in $\mathcal{R}_n$ correspond to a permutation $\pi$, since each
line contains exactly one black cube. Moreover, $\pi$ is random since,
by symmetry, every rearrangement of the sections of $\mathcal{R}_n$
that have last coordinate fixed was equally likely to occur.  Now
since cubes in $T$ have only $d-2$ neighbours in $[n]^d$ outside
$T$, it will be impossible to build-up $T$ from the black cubes in
$\mathcal{R}_n$ unless $\pi$ percolates in the Shapiro and Stevens
model. We know that event has probability tending to $0$ as
$n\rightarrow\infty$.  \qed \end{proof}

\section{The cyclic base position in three dimensions}\label{s:cyclic}

The graph $[n]^3$ consists of $n$ levels, each level being an $n\times
n$ square of unit cubes.  Each vertex is identified naturally with a
triple $(x,y,z)$.  Here the last coordinate indexes the level, and
the first two coordinates are the row and column indices.  
The rows are $1,2,\dots,n$ from top to bottom and the columns are $1,2,\dots,n$ 
from left to right as in a matrix, the levels
are $1,2,\dots, n$ from bottom to top.  
We describe a specific solution, the {\it cyclic base
  position}, that can be reached after dismantling $[n]^3$.  The set of
black vertices is $\{(i,j,k):i+j-k\equiv 1 \mod n\}$.  The top level
contains diagonal vertices.  Each consecutive level is a cyclic shift
of the previous level, see Figure~\ref{cyc5} for order 5.

\begin{figure}[ht]
  \begin{center}
    \includegraphics[width=0.75\textwidth]{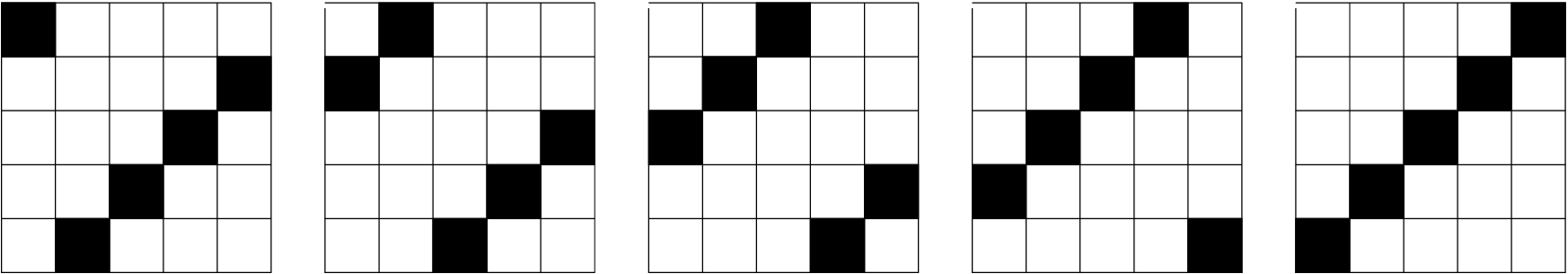}
    \caption{\label{cyc5}The cyclic base position of order $5$}
  \end{center}
\end{figure}

We define a specific subroutine, which is used several times.  Let $S$
be a facial section of $[s]^3$, that corresponds naturally to an
$s\times s$ square.  Let $D$ be a diagonal of black vertices
$(1,m),(2,m-1),\dots,(m,1)$, where $m\le s$.  Consider the upper-left
triangle $T$ consisting of the vertices with $i+j\le m$.  Now the
diagonals parallel to $D$ can be removed starting from size $1$ to
size $m-1$.  This process is the {\it diagonal peeling} of $T$ or a
diagonal peeling of size $m-1$ with corner $(1,1)$.  Any rotation or
reflection of it is also a diagonal peeling.

Let $UTB(m)=\{(i,j,k):i+j-k=m+1-n\}$ be the {\it upper triangular base} of order $m$, where $1\le m\le n$.
Similarly, let $LTB(m)=\{(i,j,k):i+j-k=2n-m\}$ be the {\it lower triangular base} or order $m$, where $1\le m\le n-1$.
The cyclic base position in $[n]^3$ is the union of $UTB(n)$ and $LTB(n-1)$.
In general $TB(m)$, a triangular base of order $m$ is a copy of $UTB(m)$ embedded and possibly rotated and reflected in $[n]^3$, where $m\le n$.
Clearly $LTB(n-1)$ is a $TB(n-1)$. 

The union $\cup_{i=1}^{m}UTB(i)$ forms a {\it heap of oranges} of order
$m$, $HO(m)$ for short.  For any $m$, where $1\le m\le n-1$, the vertices of
$UTB(m)$ are independent, and any vertex of $UTB(m)$ has precisely
three neighbours in $UTB(m+1)$.  Therefore, $HO(n)$ can be dismantled
to $UTB(n)$.

The vertices of $[n]^3$ with precisely 3 neighbours in $HO(n)$ form a $TB(n-2)$.
They are on the other side of $UTB(n)$.
Add these vertices to $HO(n)$.
Repeating this process, we can pack a $TB(n-4),TB(n-6),\dots$ on $HO(n)$. 
This is the {\it nested heap of oranges} construction, $NHO(n)$ for short.
If $n$ is even, then $NHO(n)$ is $HO(n)\cup TB(n-2)\cup \cdots\cup TB(2)$.
If $n$ is odd, then $NHO(n)$ is $NHO(n)\cup TB(n-2)\cup \cdots\cup TB(1)$.
Observe that $NHO(n)$ can be dismantled to $UTB(n)$,
and a rotated copy of $NHO(n-1)$ can be dismantled to $LTB(n-1)$.

\begin{theorem} \label{cyc}
For each positive integer $n$, where $n\ge 2$, the $n\times n\times n$
cube can be dismantled to the disjoint union of $NHO(n)$ and a rotated
copy of $NHO(n-1)$.
 \end{theorem}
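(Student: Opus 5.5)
We argue in the build-up direction: it is enough to show that $[n]^3$ can be built up from $U_n:=NHO(n)\cup R$, where $R$ denotes the rotated copy of $NHO(n-1)$ sitting on $LTB(n-1)$. Combined with the observations preceding the statement, this gives the full dismantling of $[n]^3$ to the cyclic base position. By \cref{c:strong} and \cref{c:LSgreedy}, one successful order of moves suffices, and the order of independent moves is irrelevant by \tref{anytwo}.

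\textbf{Coordinates.} We use $s=i+j-k$. A vertex in plane $s$ has at most three neighbours in plane $s-1$, namely $(i-1,j,k)$, $(i,j-1,k)$, $(i,j,k+1)$, and at most three in plane $s+1$.
\begin{itemize}
\item $HO(n)$ is exactly the set of vertices with $2-n\le s\le 1$.
\item The nested triangles $TB(n-2),TB(n-4),\dots$ lie in planes $s=2,3,\dots$. Each consists of the vertices having all three lower neighbours already present.
\item Symmetrically, $R$ fills the planes $s\ge n+1$, and its nested triangles lie in the planes $s=n,n-1,\dots$.
\end{itemize}
So the white region $W=[n]^3\setminus U_n$ lies in the slab $2\le s\le n$. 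Inside each plane $s$, $W$ is the ``frame'' of vertices near the boundary of $[n]^3$. These vertices have fewer than three in-box neighbours on the side facing the heap, because the missing neighbour lies outside $[n]^3$.

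\textbf{Filling $W$.}
\begin{enumerate}
\item Show that every vertex of $W$ lies within a bounded distance of one of the six facial sections. More precisely, the part of $W$ in plane $s$ is a union of strips along the lines where the plane meets the facial sections $i=1$, $j=1$, $k=n$ (for small $s$) and $i=n$, $j=n$, $k=1$ (for large $s$). The strip widths grow in steps of one as the nested triangles shrink by two.
\item Fill these strips facial section by facial section using diagonal peelings. In a facial section, say $i=1$, a boundary vertex has only five neighbours in $[n]^3$, one of which lies outside the section. After the adjacent section $i=2$ is partly built, a vertex of $i=1$ needs exactly two in-section neighbours plus that one. This is precisely the diagonal peeling subroutine, with corner at the appropriate corner of the face.
\item Proceed by induction on $n$. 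Once the three faces $i=1$, $j=1$, $k=n$ (around the corner of $HO(n)$) and the three opposite faces (around the corner of $R$) are completely filled, remove this outer shell. What remains is a copy of $[n-2]^3$, and I expect $U_n$ restricted to it to be $U_{n-2}$, up to the isometry exchanging the two heaps. This is what the nested construction with steps of two suggests. The small cases $n=2,3$ are checked directly.
\end{enumerate}

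\textbf{Main obstacle.} The hardest part is step 3: verifying that filling the shell works, and that the shell really peels. Concretely, one must check that the black sets of the two nested heaps, intersected with the six faces, are exactly diagonals of the right lengths to start the peelings. One must also check that at each moment the vertex being added has exactly three present neighbours. The delicate case is the twelve edges and eight corners of the cube, where the two heaps meet and a vertex could acquire a fourth neighbour.

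My first attempt would be an ordering: peel the three faces around the $HO$ corner first, in increasing $s$, then the three opposite faces in decreasing $s$, then the edges last. I would check the degree count along an edge line explicitly, where exactly one neighbour comes from each of the two adjacent faces and one from the edge line. If the inductive identification of $U_n\cap[2,n-1]^3$ with $U_{n-2}$ fails, the fallback is a direct argument. In that case I would fill plane $s=2,3,\dots$ from the $HO$ side and plane $s=n,n-1,\dots$ from the other side simultaneously, and at each plane use the three boundary lines as the diagonal-peeling starts.
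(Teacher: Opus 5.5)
Your plan is the paper's argument in inductive form. The paper dismantles $[n]^3$ from the outside in. It peels each shell by two rounds of three diagonal peelings: first the faces $i=1$, $j=1$, $k=n$ from the corners $(1,n,1)$, $(n,1,1)$, $(n,n,n)$, then the faces $k=1$, $i=n$, $j=n$ from $(2,2,1)$, $(n,2,n-1)$, $(2,n,n-1)$. It then repeats one diagonal step inward.

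The identification you only ``expect'' is exact and needs no isometry. Translating by $-(1,1,1)$ lowers $s$ by one. So $(HO(n)\cup TB(n-2))\cap[2,n-1]^3$ becomes $\{s\le 1\}=HO(n-2)$, the later nested triangles become those of $NHO(n-2)$, and the other heap behaves symmetrically. Hence $U_n\cap[2,n-1]^3=(1,1,1)+U_{n-2}$.

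Your tentative order inside a shell also works, provided the inner cube is already full. That order is: interiors of the faces $i=1,j=1,k=n$ by increasing $s$; interiors of the opposite faces by decreasing $s$; then the six edges joining the two groups, each filled from its black end; and finally the corners $(1,n,1)$, $(n,1,1)$, $(n,n,n)$.

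What is missing is a justification for applying the induction hypothesis to $[2,n-1]^3$ inside $[n]^3$, and this is not the edge issue you flagged. The inner cube is not isolated: it is surrounded by the black cubes of $U_n$ lying in the shell. A white inner cube touching one of them would have one more neighbour than it has in $[n-2]^3$, so the moves supplied by induction could be illegal. You must rule this out. It holds because the nested triangles never meet the boundary. The black cubes on the boundary of $[n]^3$ are exactly those with $s\le 1$ or $s\ge n+1$, and those on the boundary of $[2,n-1]^3$ are exactly those with $s\le 2$ or $s\ge n$. A white cube on the latter boundary therefore has $3\le s\le n-1$. Each of its shell neighbours has $s$ differing by one, so lies in $[2,n]$ and is white.

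Relatedly, step 3 as worded has the order backwards for a build-up. The inner cube must be completed before the shell. For example, a white cube $(1,j,j-1)$ with $3\le j\le n-1$ has only two black neighbours in its face, and needs $(2,j,j-1)$, which is a white cube of the inner cube. ``Fill the faces, remove the shell, recurse'' is instead the dismantling order the paper uses, where the shell is peeled while the inner cube is still full. With these two points added, the argument goes through.
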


\begin{proof}
Consider the three original corners of $[n]^3$ outside of
$NHO(n)$ and $NHO(n-1)$: $(1,n,1)$, $(n,1,1)$, $(n,n,n)$.  Execute
diagonal peelings of size $n-1$ with the above three corners to remove
vertices of form $(1,y,z)$, $(x,1,z)$, $(x,y,n)$.  We remove 
$3T_{n-1}$ cubes in this step, where $T_m=m(m-1)/2$ is the triangular
number.  In particular, we remove $(1,2,1)$, $(2,1,1)$, $(n,1,n-1)$,
$(n,2,n)$, $(1,n,n-1)$, $(2,n,n)$.  Therefore, the following vertices
now have three neighbours: $(2,2,1)$, $(n,2,n-1)$, $(2,n,n-1)$.
Execute diagonal peelings of size $n-2$ with the above corners to
remove vertices of form $(x,y,1)$, $(n,y,z)$, $(x,n,z)$.  In these two
steps, we removed all vertices outside of $NHO(n)$ and $NHO(n-1)$ with
any coordinate equal to $1$ or $n$.  In some sense, we peeled of one
hull from the cube.

{}From now on, we repeat these two steps from corners, each of which is a diagonal step away from one of the previous six.
The series of corners are: $(1+a,n-a,1+a)$,
$(n-a,1+a,1+a)$, $(n-a,n-a,n-a)$, $(2+b,2+b,1+b)$, $(n-b,2+b,n-1-b)$,
$(2+b,n-b,n-1-b)$, where $0\le a\le \lfloor n/2\rfloor-1$ and $0\le b\le
\lceil n/2\rceil-2$.  So the second iteration would use diagonal
peelings from the corners $(2,n-1,2)$, $(n-1,2,2)$, $(n-1,n-1,n-1)$,
and $(3,3,2)$, $(n-1,3,n-2)$, $(3,n-1,n-2)$ to remove vertices with
any coordinate equal to $2$ or $n-1$.  Each new peeling is made
possible by the preceding peeling, which removed the cubes that were
neighbours on the ``outside'' of the ones we want to remove next.

We can perform $n-1$ steps of this algorithm removing $(n-1)n(n+1)/2$
cubes in total, 3 times the $(n-1)^{\rm st}$ tetrahedral number $H_{n-1}$.
Adding these to the number of cubes in $NHO(n)$ and $NHO(n-1)$ gives
us $n^3$, certifying our algorithm.  Indeed,
\begin{align*}
|NHO(n)|+|NHO(n-1)|
&=H_{n}+T_{n-2}+T_{n-4}+\cdots\\
&\quad+H_{n-1}+T_{n-3}+T_{n-5}+\cdots\\
&=H_{n}+H_{n-1}+H_{n-2}.
\end{align*}
Therefore, we need to check $H_{n}+4H_{n-1}+H_{n-2}=n^3$, which is true.
\qed \end{proof}

\begin{corollary}\label{cy:cyclic}
For each positive integer $n$, where $n\ge 2$, the $n\times n\times n$
cube can be dismantled to the cyclic base position.
\end{corollary}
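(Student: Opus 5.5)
The plan is to combine \tref{cyc} with the observation made just before it: $NHO(n)$ can be dismantled to $UTB(n)$, and the rotated copy of $NHO(n-1)$ can be dismantled to $LTB(n-1)$. Concatenating these dismantlings after the one in \tref{cyc} yields a dismantling of $[n]^3$ to $UTB(n)\cup LTB(n-1)$. Since $i+j-k$ ranges over $[2-n,2n-1]$, the congruence $i+j-k\equiv 1 \pmod n$ holds exactly when $i+j-k\in\{1,1+n\}$. These two values are precisely $UTB(n)$ (where $i+j-k=m+1-n$ with $m=n$, giving $1$) and $LTB(n-1)$ (where $i+j-k=2n-(n-1)=n+1$). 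So this union is the cyclic base position.

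The substantive point is that the two inner dismantlings can be run after \tref{cyc} and interleaved or concatenated without interference. A move removes a vertex whose current degree is exactly $3$, so I must check that the degrees inside each heap are unaffected by the other heap. This holds if no vertex of $NHO(n)$ is adjacent to a vertex of the rotated $NHO(n-1)$, apart from adjacencies that survive to the end and are therefore harmless.

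I would verify this with the level function $f=i+j-k$, which changes by exactly $\pm1$ along every edge. The set $NHO(n)$ occupies $f\le 1$: $HO(n)$ has $f\le1$, and each nested $TB(n-2j)$ sits on the values $f=2,4,\dots$ just beyond $UTB(n)$ inside the region $f\le$ something. Symmetrically, the rotated $NHO(n-1)$ lies on values $f\ge n+1$, together with its nested layers at $n-1, n-3,\dots$. The nested layers could therefore meet, and this is the step I expect to be the main obstacle. I would handle it by dismantling both heaps in reverse order of construction, outermost nested layer first. Each nested layer $TB(n-2j)$ was added with exactly $3$ neighbours in the layer below it, and the step removes it while the two heaps are still disjoint and each vertex's neighbours lie on levels $f\pm1$. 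Alternatively, and more simply, I would invoke \tref{anytwo} and \tref{t:notcontain} style reasoning: the union of the two heaps is a position reachable from $[n]^3$, and reversing their build-ups from $UTB(n)\cup LTB(n-1)$ gives a legal build-up. So it suffices to check that each nested layer vertex has exactly $3$ neighbours in the union at its time of addition, with the count made explicit via the $\pm1$ level structure.

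Finally I would note that the resulting base position has $n^2$ cubes, namely $T_{n+1}+T_n=n^2$ with $T_m$ as in the paper's indexing. So it is indeed a solution, and the corollary follows.
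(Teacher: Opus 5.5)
Your route is the paper's: \tref{cyc}, then the dismantlings $NHO(n)\to UTB(n)$ and (rotated) $NHO(n-1)\to LTB(n-1)$, with $i+j-k\in\{1,n+1\}$ identifying $UTB(n)\cup LTB(n-1)$ as the cyclic base position. The paper treats running the two heap dismantlings one after the other as immediate.

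The gap is in the step you yourself flag as substantive. You never establish that the heaps do not interfere, and the level data you give for it are wrong:
\begin{itemize}
\item $NHO(n)$ is not contained in $\{f\le 1\}$. Its nested layer $TB(n-2j)$ lies on the single level $f=j+1$, so on $f=2,3,4,\dots$, not $2,4,\dots$.
\item The nested layer $TB(n-1-2j)$ of the rotated $NHO(n-1)$ lies on $f=n+1-j$, so on $n,n-1,\dots$, not $n-1,n-3,\dots$.
\end{itemize}
Your remedy also misses the point. No removal order can compensate for an edge between the heaps. If an outer-layer vertex $u$ of one heap were adjacent to a vertex $v$ of the other, then $u$ would have degree at least $4$ while $v$ is present, and vice versa, so neither could ever be removed. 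What is needed is non-adjacency, not disjointness. Invoking \tref{anytwo} and \tref{t:notcontain} only restates the neighbour count you still have to carry out.

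The check is short with your own level function.
\begin{itemize}
\item $TB(n-2j)$ occurs only for $j\le\lfloor (n-1)/2\rfloor$, so $NHO(n)\subseteq\{f\le\lceil n/2\rceil\}$.
\item $TB(n-1-2j)$ occurs only for $j\le\lfloor (n-2)/2\rfloor$, so the rotated $NHO(n-1)$ lies in $\{f\ge\lceil n/2\rceil+2\}$.
\end{itemize}
Every edge changes $f$ by exactly $1$, and the level $f=\lceil n/2\rceil+1$ contains no vertex of either heap. Hence no edge joins the two heaps. Degrees in the union therefore equal degrees within each heap, and the two dismantlings can be concatenated.

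Alternatively, you can avoid the geometry entirely. The cyclic base position is perfect, hence convex. By \lref{convex}, every position in a build-up from it is convex, so a white vertex has at most one black neighbour per line, that is, at most $3$. Each vertex added in the reversed heap dismantlings already has $3$ neighbours in its own heap. It therefore has exactly $3$ in the union, so the combined build-up is legal.
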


If we are looking for more solutions, it is natural to consider
variations of the cyclic base position.  One option is to permute the
levels, which corresponds to permuting the symbols in the
corresponding latin square.  Executing a computer search, we found
that of the $n!$ permutations of the symbols, the number that produced
a solution is as follows.
\[
\begin{tabular}{|c|cccccccc|}
\hline
$n$&3&4&5&6&7&8&9&10\\
\hline
\#solutions&6&16&40&96&200&352&552&800\\
\hline
\end{tabular}
\]
We will not consider general permutations further, but rather work
towards showing that cyclic permutations do produce solutions.

Starting from a corner of $[n]^3$ we can remove a line of cubes of
length $k$, where $1\le k\le n-1$.  As a consequence, we can build-up
a missing line of cubes on the edge of a cube.  We use this
observation in a more general context, when the line of cubes is
somewhere inside $[n]^3$, but a dismantling generated an equivalent
situation.

A {\it staircase} of size $m$ and depth $t$ is the union of lines of
length $t$ starting from a set of vertices that form a diagonal
peeling of size $m$.

\begin{lemma} \label{stair}
 We can remove a staircase of size $m$ and depth $t$ from $[n]^3$, where
 $1\le m<n$ and $1\le t<n$.
\end{lemma}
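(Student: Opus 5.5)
Set coordinates so the staircase sits in a corner of $[n]^3$. The diagonal peeling of size $m$ is the triangle $T=\{(i,j,1): i+j\le m+1\}$ on the facial section $z=1$, with corner $(1,1,1)$. The staircase is then $\{(i,j,k): i+j\le m+1,\ 1\le k\le t\}$. I will remove it layer by layer in $k$, and inside each layer by the diagonals $i+j=2,3,\dots,m+1$ in increasing order.

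The key step is to count the neighbours of a cube $v=(i,j,k)$ in the staircase at the moment it is removed. By then every cube with index pair $(i',j')$ satisfying $i'+j'<i+j$ has been removed from layers $1,\dots,k$. All cubes in layers $<k$ have been removed as well. I check the six possible neighbours in turn:
\begin{itemize}
\item $(i-1,j,k)$ and $(i,j-1,k)$ are either outside the cube (when $i=1$ or $j=1$) or already removed.
\item $(i,j,k-1)$ is outside the cube when $k=1$, and otherwise already removed.
\item $(i+1,j,k)$ and $(i,j+1,k)$ are still present: either they lie outside the staircase (when $i+j=m+1$), or they are on a later diagonal. They exist because $i+j\le m+1\le n$, so $i+1\le n$ and $j+1\le n$.
\item $(i,j,k+1)$ is still present and exists because $k\le t<n$.
\end{itemize}
So $v$ has exactly the three neighbours $(i+1,j,k)$, $(i,j+1,k)$, $(i,j,k+1)$ in three orthogonal directions, and each removal is a legal (indeed balanced) move. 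The hypotheses $m<n$ and $t<n$ are used precisely to guarantee that these three forward neighbours exist. The argument in fact only needs $m\le n-1$, which is exactly the stated range.

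There are two points to verify carefully; I expect the only real obstacle to be bookkeeping.
\begin{itemize}
\item Cubes on the same diagonal of the same layer are independent, since they differ in two coordinates. Hence removing one does not change the degree of another, and the diagonal can be removed in any order.
\item The ordering is consistent: within a layer, a cube is removed after its backward neighbours in the $i$ and $j$ directions; across layers, it is removed after its lower neighbour.
\end{itemize}
An equivalent order peels each full layer by diagonals, which is the diagonal peeling applied repeatedly to a cube whose lower layer has gone. Alternatively, one can run along each line of length $t$ in diagonal order; I would present whichever is cleaner.

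Finally, I would remark that the same argument works from any corner by symmetry, and in any situation where a dismantling has produced an equivalent local configuration. This is the context in which the paper will apply the lemma. The requirement there is that the cubes on the "outside" of the staircase are absent, while the forward neighbours are present.
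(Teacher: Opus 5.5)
Your proof is correct and is essentially the paper's argument. The paper simply says the staircase is the union of $t$ diagonal peelings of size $m$ (or, equivalently, repeated removal of lines in diagonal order); your layer-by-layer, diagonal-by-diagonal removal, with its explicit neighbour count showing that only $(i+1,j,k)$, $(i,j+1,k)$, $(i,j,k+1)$ remain and that these exist because $m<n$ and $t<n$, is exactly that argument with the details filled in.
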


\begin{proof}
We can either say that this is the union of $t$ diagonal
peelings of size $m$, or refer to the repeated removal of lines in
diagonal fashion.  
\qed \end{proof}

The cyclic base position is the intersection of $[n]^3$ with two planes.
When we permute the levels cyclically, we get the set
$\{(i,j,k):i+j-k\equiv 1-s \mod n\}$, where $1\le s\le n-1$.  It is
the intersection of $[n]^3$ with three planes according to whether
$i+j-k$ takes the value $1-s$, $n+1-s$ or $2n+1-s$.  These are two
triangular and one hexagonal region.  Therefore, let
$HB_n^s=\{(i,j,k):i+j-k=n+1-s\}$ be the {\it hexagonal board} of size
$s$ in $[n]^3$, where $1\le s\le n-1$.
Let $HB(m+1-s,s)$ denote any rotated or reflected copy of $HB_m^s$
embedded in $[n]^3$, where $m\le n$.  In this way $HB(1,1)$ is a single
cube, $HB(2,1)$ corresponds to three cubes, $HB(2,2)$ consists of
seven cubes etc.  As for the triangular boards, we can put together
hexagonal boards in a nested fashion.  Let $m$ be a positive integer
and $P_m=1$ if $m$ is odd, $P_m=2$ if $m$ is even.  Let $DNH(n,m)$ be
the {\it doubly nested hexagon} of size $m$, which is the union of
$HB(P_m,1),\dots, HB(m,1),\dots, HB(m,n-m+1),\dots,$ $HB(1,n-m+1),\dots, HB(1,P_m)$.  
As remarked before, the nested parts can be dismantled.
That is, $DNH(n,m)$ can be dismantled to $HB(m)$.

We now prove the following generalisation of \tref{cyc}.

\begin{theorem}
 Any cyclic permutation of the levels of the cyclic base position
 gives a solution.
\end{theorem}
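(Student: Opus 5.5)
We plan to imitate the proof of \tref{cyc}, now with three pieces instead of two. Fix $1\le s\le n-1$; the case $s=0$ is \cref{cy:cyclic}. The base position $\{(i,j,k):i+j-k\equiv 1-s \bmod n\}$ splits into three planar pieces, according to whether $i+j-k$ equals $1-s$, $n+1-s$ or $2n+1-s$. The middle piece is the hexagonal board $HB_n^s$. The outer two pieces are triangular bases: the plane $i+j-k=1-s$ meets $[n]^3$ in a copy of $UTB(n-s)$ near the corner $(1,1,n)$, and the plane $i+j-k=2n+1-s$ meets it in a copy of $LTB(s-1)$ near the corner $(n,n,1)$. We will exhibit a dismantling of $[n]^3$ to the disjoint union of three pieces:
\begin{itemize}
\item a nested heap of oranges $NHO(n-s)$ sitting on the first triangle;
\item a rotated $NHO(s-1)$ sitting on the last triangle;
\item the doubly nested hexagon $DNH(n,\cdot)$ built on $HB_n^s$.
\end{itemize}
Each of these pieces can already be dismantled to its own board: the $NHO$'s as in \sref{s:cyclic}, and the $DNH$ as remarked just before the statement. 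Once the three pieces are in place, concatenating these dismantlings gives a dismantling to the cyclic-shifted base position, so it is a solution.

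First we verify that the three nested structures are disjoint and fill the regions on the appropriate sides of their boards. The $NHO$'s grow away from the hexagon, and the two halves of $DNH$ grow towards each triangle, stopping before they meet the heaps. Concretely, $HB(m,1),\dots,HB(P_m,1)$ lie on one side of $HB_n^s$, and $HB(1,n-m+1),\dots,HB(1,P_m)$ lie on the other. We then check that the leftover white region consists of layers that can be peeled. The complement of these three sets should be a union of triangular and staircase-shaped slabs near the six corners of $[n]^3$ not occupied by the triangles. This mirrors the proof of \tref{cyc}, where the complement was removed by diagonal peelings from six corners in $n-1$ rounds.

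Next we describe the peeling order explicitly, working hull by hull from the outside in, as in \tref{cyc}. We start from the corners $(1,n,\cdot)$ and $(n,1,\cdot)$ and the appropriate vertical corners, and perform diagonal peelings. Because the hexagon now reaches the faces $x=1$, $y=1$ and so on along a segment rather than a point, some regions to be removed are trapezoids rather than triangles. For these we use \lref{stair}: a trapezoid is a staircase of some size $m$ and depth $t$, or a diagonal peeling followed by lines of length $t$. Each peeling in round $r$ is enabled because the previous round removed the outside neighbours. The argument therefore consists of checking that every removed cube has exactly three neighbours at the time of removal, namely one inside the current hull, one in the current diagonal and one in the next line. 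We finish with a counting check that the removed cubes together with the three structures total $n^3$, analogous to $H_n+4H_{n-1}+H_{n-2}=n^3$. This gives an independent certificate that nothing is missed, and an identity in $n$ and $s$ that can be verified algebraically.

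The main obstacle is the middle step: getting the precise peeling schedule right near the hexagon. That is where the white region between the hexagonal nest and the two heaps has irregular shape depending on $s$, and where the order of peelings from the six corners must interleave correctly. I would first try to handle it by symmetry: the map $(i,j,k)\mapsto(n+1-i,n+1-j,n+1-k)$ sends $s$ to $n-s$, roughly swapping the roles of the two triangles. So it suffices to fix the schedule near one triangle and one side of the hexagon, verifying the degree-$3$ condition on a generic cube of each peeled layer. If an explicit schedule proves too messy, a fallback is \cref{c:LSgreedy}: it suffices to show that the greedy build-up from the base position never stalls. This can be argued by exhibiting, at each stage, a white cube of degree $3$ in the described nested pieces.
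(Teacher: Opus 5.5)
\textbf{There is a genuine gap.} Your intermediate position is the right one: the paper also passes through $M=NHO(n-s)\cup DNH\cup NHO(s-1)$. The problem is the step that dismantles $[n]^3$ to $M$. That step is the whole content of the theorem, and your proposal does not supply it. You list what would have to be checked, say the complement ``should be'' slabs near the corners, and call the schedule the main obstacle.

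Neither backup closes the gap. A cube count only shows that the pieces partition $[n]^3$; it says nothing about degrees. The greedy fallback is circular. \cref{c:LSgreedy} lets you \emph{test} a given base position, but proving that the greedy run never stalls for all $n,s$ needs exactly the explicit control a schedule would give. There is also a slip in the symmetry step. The inversion $(i,j,k)\mapsto(n+1-i,n+1-j,n+1-k)$ sends $s$ to $n+1-s$ modulo $n$, not to $n-s$. It also maps the configuration for $s$ to a different configuration, so it roughly halves the range of $s$ but does not let you treat only one triangle for a fixed $s$.

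\textbf{The missing idea} is that the irregular region is not organised around the corners of $[n]^3$ at all. Let $Q=[1,n-s]^2\times[s+1,n]$ be the corner subcube of side $n-s$ containing $UTB(n-s)$. The layers of the doubly nested hexagon on the side facing $UTB(n-s)$ are $\{i+j-k=n+1-s-\ell\}\cap([1,n-\ell]^2\times[\ell+1,n])$.
\begin{itemize}
\item For $\ell\le s$, these layers, together with the hexagon and the layers on its other side, meet $Q$ in complete plane sections.
\item For $\ell>s$, they are triangles of sizes $n-s-3,n-s-5,\ldots$ lying inside $Q$.
\end{itemize}
Hence $M\cap Q$ is exactly $NHO(n-s)$ together with a rotated $NHO(n-s-1)$, which is the configuration of \tref{cyc} in a cube of side $n-s$. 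Likewise, $M$ meets the corner subcube $Q'$ of side $s-1$ containing $LTB(s-1)$ in the configuration of \tref{cyc} of side $s-1$. Moreover, cubes of $M$ outside $Q$ are adjacent only to cubes of $Q$ that already lie in $M$. So the build-up of \tref{cyc} runs inside $Q$ undisturbed.

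This is how the paper proceeds. It removes $[n]^3\setminus(Q\cup Q'\cup DNH)$ with staircases (\lref{stair}), applies \tref{cyc} as a black box inside $Q$ and $Q'$, and then dismantles $M$ to the base position. It also explains why your outside-in, hull-by-hull schedule on $[n]^3$ looks intractable. The complement of $M$ inside $Q$ carries the full hull structure of \tref{cyc}, organised around the corners of $Q$ rather than those of $[n]^3$.
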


\begin{proof} 
Let as assume that we shifted the levels by $s$.  That is,
level $n$ of the cyclic base position becomes level $s$, and in
general level $i$ becomes level $i+s$ $\mod n$.  We denote this object
by $CC(n,s)$.  It is the union of three connected pieces: $UTB(n-s)$,
$HB_n^s$, $LTB(s-1)$, see picture 1, in \fref{hexbuild}.  As in
the preparation for Theorem~\ref{cyc} we can add cubes to these three
pieces to get $NHO(n-s)+DNH(n,s)+NHO(s-1)=M$, the third picture in
\fref{hexbuild}.

\begin{figure}[ht]
  \begin{center}
    \includegraphics[scale=0.2]{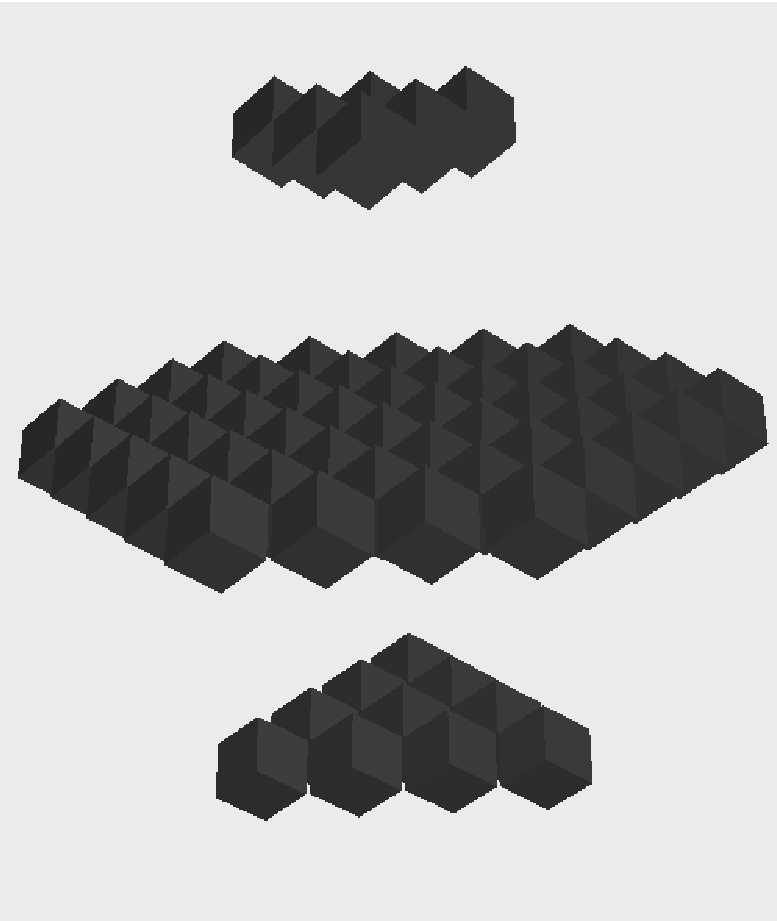}\includegraphics[scale=0.2]{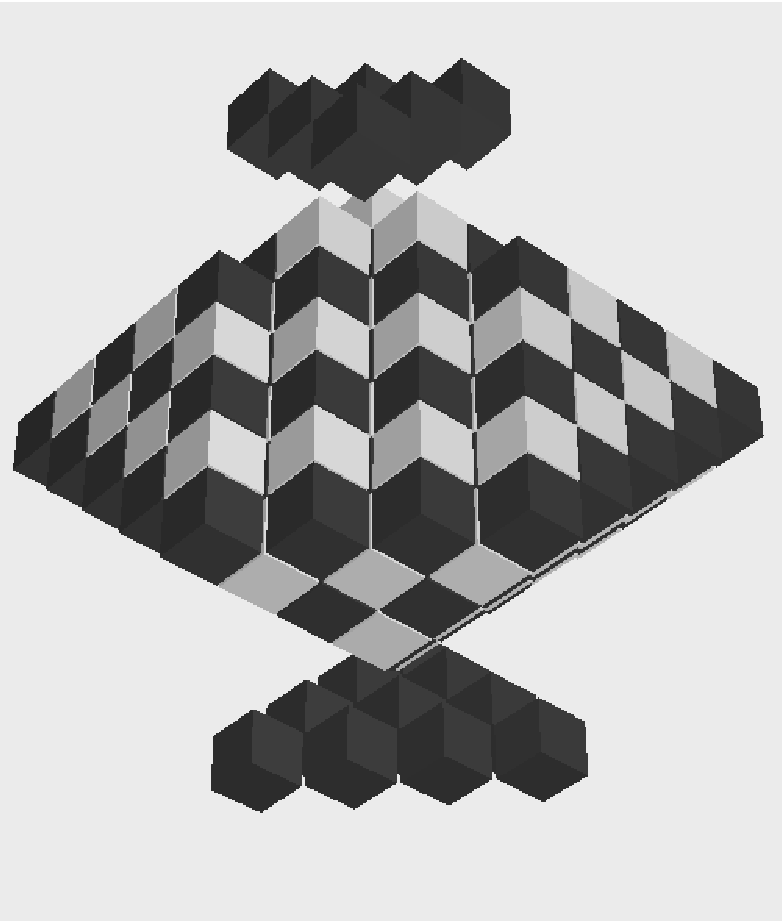}\includegraphics[scale=0.2]{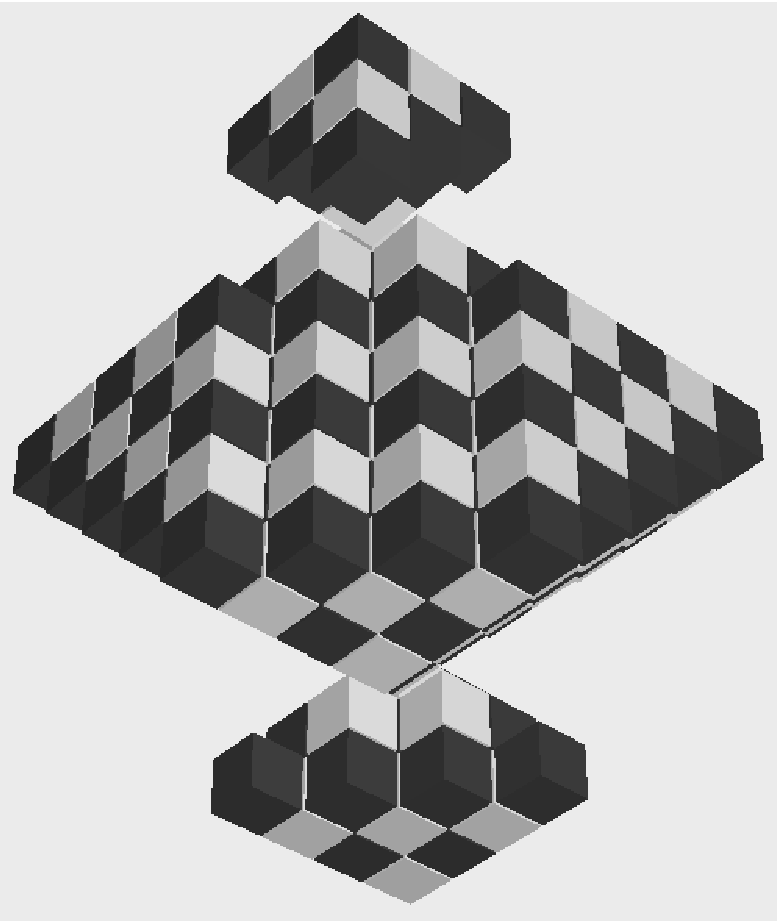}\includegraphics[scale=0.2]{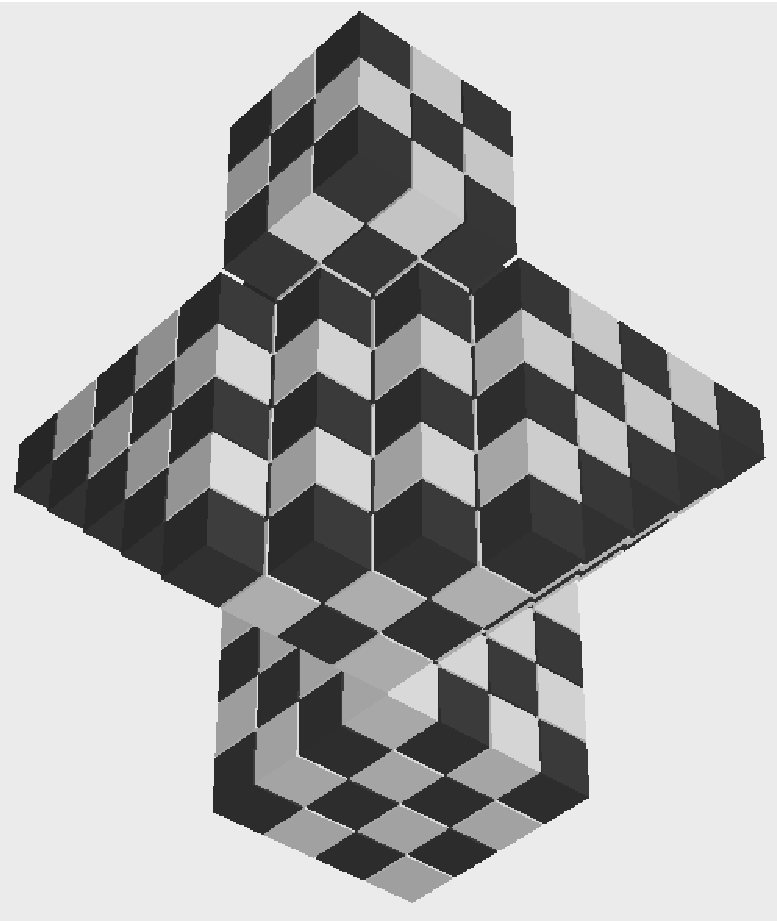}
    \caption{\label{hexbuild}Build-up from a hexagonal board via double nesting and completion of corners}
  \end{center}
\end{figure}

The key observation is that the intersection of $M$ and a $[n-s]^3$
placed in $(n,n,n)$ is a copy of $NHO(n-s)$ and $NHO(n-s-1)$.
Therefore, we can apply \tref{cyc}.  Similarly for the intersection of
$M$ and a $[s-1]^3$ placed in $(1,1,1)$.  Therefore, we can locally
build-up $[n-s]^3$ and $[s-1]^3$, see the fourth picture in
\fref{hexbuild}.  Let $F$ be the union of these two cubes with
$DNH(n,s)$.

Using the above observations, we first use \lref{stair} to
dismantle $[n]^3$ to $F$.  Secondly, we use Theorem~\ref{cyc} to
dismantle $F$ to $M$.  Finally, we dismantle $M$ to $CC(n,s)$.  
\qed \end{proof}

\section{Cuboids}

Some of our results immediately generalise to cuboids.  Let $C(k,l,m)$
be the cuboid consisting of $klm$ unit cubes.  \pref{p:opti}
gives that $\lceil \frac{kl+lm+mk}{3}\rceil$ is the minimum number of
unit cubes after any dismantling. This immediately implies that
solutions of independent vertices only exist if at least two of
$k,l,m$ are divisible by 3 or they all have the same residue modulo 3.
On the other hand, non-independent optimal positions might be more
frequent or easier to find.

\lref{convex} and Theorems~\ref{t:notcontain}, \ref{anytwo} generalise
directly.  Unfortunately, even if the cyclic base position has some
analogues, they give no solution in $C(k,l,m)$.  Therefore, it is of
great interest to find any solution in those cases.

\begin{conjecture}
There is a solution for $C(2,2,k)$ if and only if $k=5$. \\
There is a solution for $C(2,3,k)$ if and only if $k=6$.\\ 
There is a solution for $C(3,3,k)$ if and only if $k$ is odd or $k=4$.
\end{conjecture}

The following picture shows the solutions in the above cases.

\begin{figure}[ht]
  \begin{center}
    \includegraphics[width=0.75\textwidth]{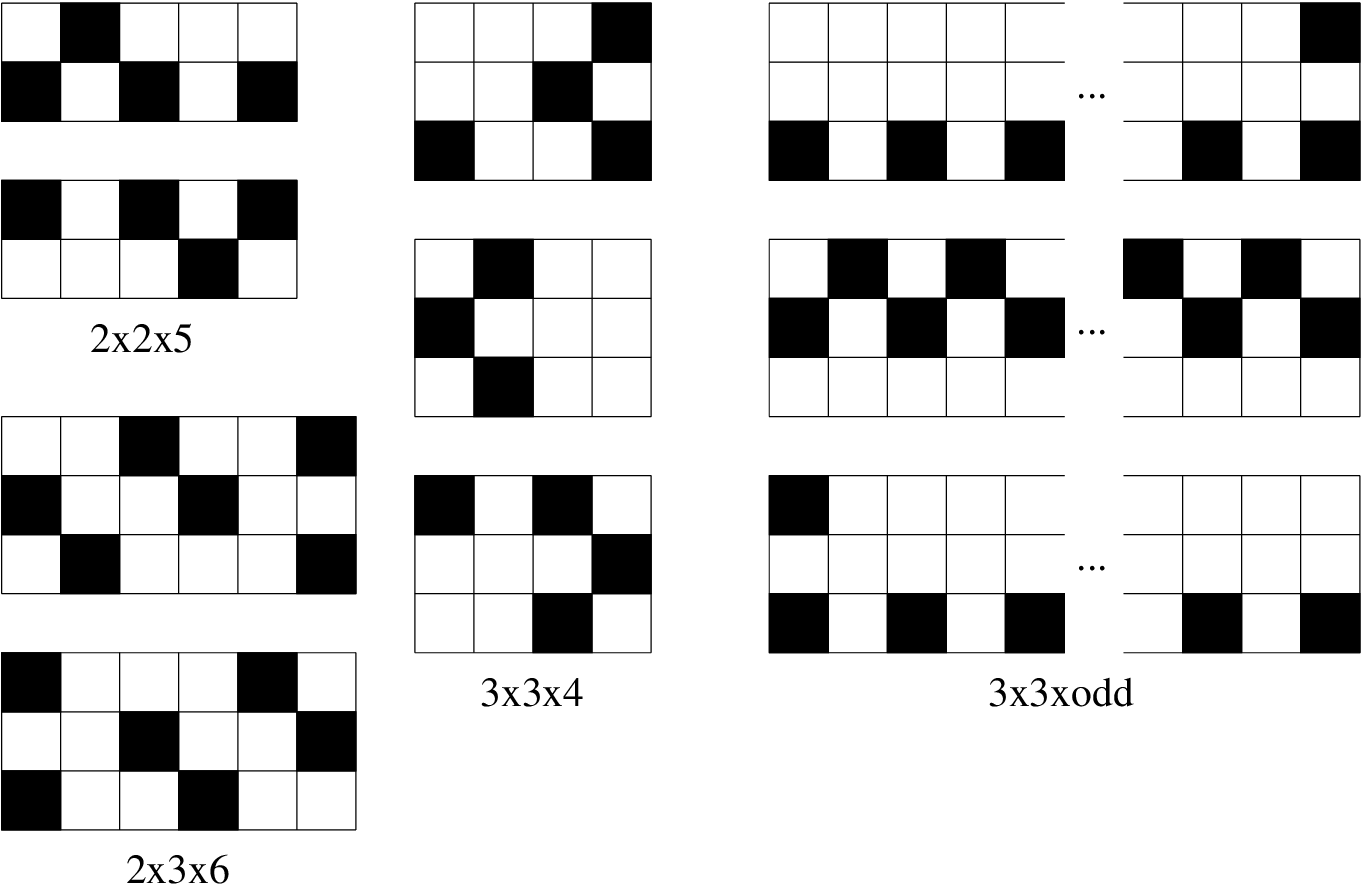}
    \caption{\label{attach}Solutions for small cuboids}
  \end{center}
\end{figure}

\section{Open problems}

In this final section we record some ideas that might be interesting to
pursue as extensions of the current paper.

The first class of open questions involves trying to make consecutive
moves from neighbouring vertices, so that the sequence of moves form a
path.  It is not possible to always obey this constraint; the first
move removes a corner of the hypercube, then you would have to work
along one edge of the hypercube, but there is no way to progress to
anywhere else. So one question is to find the fewest number of paths
that can be combined to complete a dismantling. This would be one
measure of the efficiency of the dismantling, imagining that it is
perhaps being done by a crane that takes time to move around.  An
alternative to minimising the number of paths would be to try to
minimise the total distance travelled by the crane when dismantling.
Yet another variant would be to consider cubes to be neighbours if
they meet at even just a point, then to ask for the minimum number of
paths in a dismantling. It is not clear in this variant whether it is
possible to dismantle in a single path.

The second idea is to consider the game version of dismantling. Two players
would alternate in removing vertices of degree $d$, starting from
$[n]^d$.  The one to make the last valid move wins (or, in a different
variant, the last player to move loses). Which player has a winning
strategy?  Since $[n]^d$ is a bipartite graph, it would also be possible
to two colour $[n]^d$ and allocate one colour to each player. Then you
could insist that each player can only remove vertices of their own
colour.

As $[n]^d$ is bipartite it would also be interesting to ask what are the
solutions that lie entirely in one colour class.  If we imagine the
unit cubes are coloured with two colours like in a checker board, then
we are seeking monochromatic solutions. The cyclic base position is
monochromatic if and only if $n$ is even.  The construction within
three facial sections, see Figure~\ref{pepita}, is another example.

To tie in with the extensive literature on bootstrap percolation, it
would be worth considering the minimal positions that can be reached
by dismantling from $[n]^d$. By analogy, these could be called {\it
  minimal percolating sets}. What is the maximum size of such a
minimal percolating set?

On the basis of evidence such as \Tref{T:numsol} it seems reasonable
to conjecture that most solutions in $3$-dimensions are not
perfect. However, we have not even shown the existence of an imperfect
solution for all orders. In \sref{s:cyclic}, we did show that there is
a perfect solution for all orders, and in \sref{s:geom} we also showed
an infinite family of imperfect solutions.

Finally, we note that in \tref{t:mostLSfail} we showed that almost all
Latin squares do not correspond to solutions. However, it is an open
question whether most isotopism classes of Latin squares contain an example
that corresponds to a solution.

\begin{acknowledgements}

We would like to thank Viktor V\'\i gh for bringing this problem into
our attention as well as reference \cite{nandi}.  The dismantling of
$[5]^3$ with gravity is mentioned in a solution of a problem posed in a
mathematical competition for elementary school pupils \cite{nandi}.
The gravity version distinguishes a bottom level and allows
removal of cubes from the top of a position only.  That is, if the
removal of a cube would cause another cube to fall, then the move is
forbidden.  Our version with zero gravity \cite{hj2011} captures the
truly combinatorial flavour of the problem.  The first author is
grateful for M\'ark Korondi and Vir\'ag Varga for fruitful
discussions, computational work and visualisation.

\end{acknowledgements}

\end{document}